\numberwithin{equation}{section}
\newtheorem{Theorem}{Theorem}[section]
\newtheorem*{Theorem*}{Theorem}
\newtheorem{Lemma}[Theorem]{Lemma}
\theoremstyle{definition}
\newtheorem{Definition}[Theorem]{Definition}
\newtheorem{Remark}[Theorem]{Remark} }
\newcommand{\C}{\mathbb{C}}
\newcommand{\dd}{\mathrm{d}}
\renewcommand{\sp}{\hspace{0.5pt}}
\setlist[enumerate]{label=(\roman*)}
\begin{document}

\newcommand{\arXivNumber}{2506.?????}

\renewcommand{\thefootnote}{}

\renewcommand{\PaperNumber}{051}

\FirstPageHeading

\ShortArticleName{Factorization of Basic Hypergeometric Series}

\ArticleName{Factorization of Basic Hypergeometric Series\footnote{This paper is a~contribution to the Special Issue on Basic Hypergeometric Series Associated with Root Systems and Applications in honor of Stephen C.~Milne's 75th birthday. The~full collection is available at \href{https://www.emis.de/journals/SIGMA/Milne.html}{https://www.emis.de/journals/SIGMA/Milne.html}}}

\Author{Jonathan G.~BRADLEY-THRUSH}

\AuthorNameForHeading{J.G.~Bradley-Thrush}

\Address{Grupo de F\'{\i}sica Matem\'atica, Instituto Superior T\'ecnico, Universidade de Lisboa, \\ Av. Rovisco Pais, 1049-001 Lisboa, Portugal}
\Email{\href{mailto:jgbradleythrush@ciencias.ulisboa.pt}{jgbradleythrush@ciencias.ulisboa.pt}}

\ArticleDates{Received November 30, 2024, in final form June 22, 2025; Published online July 06, 2025}

\Abstract{The general problem of the factorization of a basic hypergeometric series is presented and discussed. The case of the general $_2\psi_2$ series is examined in detail. Connections are found with the theory of basic hypergeometric series on root systems. Alternative proofs of several well-known summation and transformation formulae, including Gustafson's generalization of Ramanujan's $_1\psi_1$ summation, are obtained incidentally.}

\Keywords{basic hypergeometric series; theta functions; elliptic functions; $q$-difference equations}

\Classification{33D15; 33E05; 33D67; 39A13}

\renewcommand{\thefootnote}{\arabic{footnote}}
\setcounter{footnote}{0}

\section{Introduction}

Throughout this paper, $q$ is taken to denote a complex number satisfying $0<|q|<1$. A basic hypergeometric series is any sum $\sum_{n=M}^N u_n$ for which the ratio $u_{n+1}/u_n$ is a rational function of~$q^n$. Here, $M$ and $N$ may take any integer values subject to the condition that~${M \leq N}$, and one may also take $M=-\infty$, or $N=\infty$, or both. It is conventional to write the summand~$u_n$ in terms of a~ratio of products of finite $q$-Pochhammer symbols, $(x)_n$, defined such as to take the value~$1$ when~${n=0}$ and extended to all integer subscripts $n$ by the recursion~${(x)_{n+1} = (1-xq^n)(x)_n}$. The infinite $q$-Pochhammer symbol, $(x)_\infty$, is the infinite product given by the limit $\lim_{n \rightarrow \infty} (x)_n$. The abbreviation $(x_1,x_2, \dots, x_m)_n$ is used for the product~${(x_1)_n (x_2)_n \cdots (x_m)_n}$. The notation~${\theta(x) = (x,q/x,q)_\infty}$ is used for the Jacobian theta function.

The question of whether a given basic hypergeometric series may be written explicitly as an infinite product has a long history. In the 1840s, both Jacobi~\cite[equation~(15)]{Jacobi1846} and Heine~\cite[equation~(80)]{Heine1847} obtained the summation
\begin{equation} \label{eqn:q-Gauss}
\sum_{n=0}^\infty {(a,b)_n \over (q,abx)_n} x^n = {(ax,bx)_\infty \over (x,abx)_\infty},
\end{equation}
valid for $|x|<1$, which is a $q$-analogue of Gauss's $_2F_1$ summation. The special case ${b=0}$ of this formula had been found already by Cauchy~\cite[Section~I, equation~(15)]{Cauchy1843}, and the case~${a=b=0}$ goes back to Euler~\cite[Section~25]{Euler1753}. Other well-known factorizations include the bilateral summations
\begin{equation} \label{eqn:1psi1}
\sum_{n=-\infty}^\infty {(a)_n \over (b)_n} x^n = {\bigl({b \over a}\bigr)_\infty \theta(ax) \over \bigl(x,{b \over ax},b,{q \over a}\bigr)_\infty},
\end{equation}
valid for $|b/a|<|x|<1$, and
\begin{gather}
\sum_{n=-\infty}^\infty {\bigl(a_1y,a_2y,a_3y,a_4y\bigr)_n \over \bigl({qy \over a_1}, {qy \over a_2}, {qy \over a_3}, {qy \over a_4}\bigr)_n} \bigl( 1 - y^2q^{2n} \bigr) \bigg( {q \over a_1a_2a_3a_4} \bigg)^{ n}\nonumber \\
\qquad = {\bigl({q \over a_1a_2}, {q \over a_1a_3}, {q \over a_1a_4}, {q \over a_2a_3}, {q \over a_2a_4}, {q \over a_3a_4}\bigr)_\infty \theta\bigl(y^2\bigr) \over \bigl({q \over a_1y}, {q \over a_2y}, {q \over a_3y}, {q \over a_4y}, {qy \over a_1}, {qy \over a_2}, {qy \over a_3}, {qy \over a_4}, {q \over a_1a_2a_3a_4}\bigr)_\infty},\label{eqn:6psi6}
\end{gather}
valid for $|a_1a_2a_3a_4|>|q|$, due respectively to Ramanujan (see~\cite[Sections~3.2 and~3.3]{Andrews2009} or~\cite[pp.~222--223]{Hardy1940}) and Bailey~\cite[equation~(4.7)]{Bailey1936}. There are also a number of series for which it is generally accepted that, for general values of the parameters involved, no summation formula exists. An example of such a series is
\begin{equation} \label{eqn:2phi1_series}
\sum_{n=0}^\infty {(a,b)_n \over (q,c)_n} x^n,
\end{equation}
which contains the general $_2F_1$ series as a limiting case. In the case of this particular series, the non-existence of a summation formula is stated explicitly by Johnson~\cite[p.~227]{Johnson2020}. Another prominent example is the partial theta series,
\begin{equation} \label{eqn:partial_theta_series}
\sum_{n=0}^\infty (-1)^n q^{n(n-1) \over 2} x^n.
\end{equation}
Being an entire function of $x$, this does of course admit a Weierstrass product representation; several of its factors were recorded by Ramanujan in his notebooks~\cite[p.~26]{Ramanujan1988} (see~\cite[pp.~285--286]{Andrews2005}). Likewise~\eqref{eqn:2phi1_series}, continued analytically beyond the disk $|x|<1$, represents a meromorphic function of $x$ on the whole complex plane. Its poles are known to occur at the points $x=q^{-n}$, $n \geq 0$, and hence we may write~\eqref{eqn:2phi1_series} in the form\footnote{The factorization of the numerator here is an application of Hadamard's theorem. The absence of exponential factors is due to the fact that $(x)_\infty \sum_{n=0}^\infty (a,b)_n x^n/(q,c)_n$ is entire of order $0$. This can be deduced from~\cite[equation~(4.3.2)]{Gasper2004}, but it may also be proved as follows: This function admits a power series expansion $\sum_{n=0}^\infty S_n x^n$ in which the coefficients are the finite sums
\[ S_n = \sum_{m=0}^n {(-1)^{n-m} q^{(n-m)(n-m-1) \over 2} (a,b)_m \over (q,c)_m (q)_{n-m}} = {(-1)^n q^{n(n-1) \over 2} a^n \over (c)_n} \sum_{m=0}^n {(a)_m (c/a)_{n-m} \over (q)_m (q)_{n-m}} \bigg( {b \over a} \bigg)^{ m}. \]
The first equality here follows easily from the power series expansion of $(x)_\infty$, while the second is a special case of one of Sears's transformation formulae (see~\cite[equation~(3.2.2)]{Gasper2004}). From the second expression for $S_n$, it is apparent that $1/\log|S_n| = O\bigl(1/n^2\bigr)$, which implies that the order of $\sum_{n=0}^\infty S_n x^n$ equals zero.} $(x)_\infty^{-1}\prod_{n=1}^\infty (1-x/\xi_n)$, where the values of the zeros $\xi_n$ depend on the parameters $a$, $b$, $c$ and $q$. There is, however, no general formula known for the $n$th zero $\xi_n$.

The purpose of this paper is to examine a different sense in which every basic hypergeometric series may be said to admit a factorization---one which involves only finitely many unknowns. There is no loss of generality in restricting our attention to bilateral series, since, given any finite or unilateral series, it is always possible to produce a bilateral series reducible to it by a suitable specialization of variables. We may, for example, obtain~\eqref{eqn:partial_theta_series} from the series~${\sum_{n=-\infty}^\infty (-1)^n q^{n(n-1)/2} x^n (a)_n/(b)_n}$ by first setting $b=q$ and then setting $a=q$. Further, there is no loss of generality in supposing that the number of $q$-Pochhammer symbols present in the numerator and denominator are equal, since this can be seen to incorporate the case of unequal numbers of factors by inserting extra factors of $(0)_n$ in the denominator of any such series.

The notation
\begin{equation} \label{eqn:rpsir_def}
_r\psi_r(x,y) := \sum_{n=-\infty}^\infty {(a_1 y, a_2 y, \dots, a_r y)_n \over (b_1 y, b_2 y, \dots, b_r y)_n} x^n
\end{equation}
will be used for a bilateral basic hypergeometric series, the parameters $a_j$ and $b_j$ being complex numbers independent of the variables $x$ and $y$ with the restriction that each $a_j$ is non-zero. Since the variable $y$ can be removed by rescaling the other parameters, there would be no loss of generality in setting it equal to $1$. It has nonetheless been inserted in the definition~\eqref{eqn:rpsir_def} because it plays a significant role in the methods of this paper: the goal will be to investigate the factorization of the function $_r\psi_r$ with respect to $y$.

The series in~\eqref{eqn:rpsir_def} converges when $x$ lies within the annulus $|b_1\cdots b_r/a_1\cdots a_r|<|x|<1$. Elsewhere, the function $_r\psi_r$ is defined by analytic continuation. When the value of $b_1\cdots b_r/a_1\cdots a_r$ is such that the annulus of convergence is empty, the series may instead be split into two unilateral series, one convergent for $|x|<1$ and the other convergent for $|x|>|b_1\cdots b_r/a_1\cdots a_r|$; separately each of the two series may then be continued beyond its region of convergence.\footnote{An alternative is to regard one of the parameters $a_j$ as variable, and to continue the series beyond its region of convergence as a function of $a_j$.} Analytic continuation of these unilateral series is discussed in~\cite[Section~4.5]{Gasper2004}. It is known that the function $_r\psi_r$ is meromorphic on $(\C \backslash \{0\})^2$, which is to say that it is expressible as the ratio of two functions analytic there. Since it is a straightforward matter to locate the poles of the function,\footnote{The poles at $y=q^n/a_j$ and at $y=b_j q^{-n}$ ($n \geq 0$ and $1 \leq j \leq r$) arise from singularities in the terms of the series in~\eqref{eqn:rpsir_def}. The poles at $x=q^{-n}$ and at $x = b_1 b_2 \cdots b_r q^n /a_1 a_2 \cdots a_r$ ($n \geq 0$) arise from analytic continuation of the series, as may be seen from~\cite[equation~(4.5.2)]{Gasper2004}.} the factorization of its denominator is known completely: we may write
\[ _r\psi_r(x,y) = {_r^{}\psi_r^\ast(x,y) \over \bigl(x,{b_1 b_2 \cdots b_r \over a_1 a_2 \cdots a_r x},{q \over a_1y},b_1y,{q \over a_2y},b_2y,\dots,{q \over a_ry},b_ry\bigr)_{ \infty}}, \]
where the numerator, $_r^{}\psi_r^\ast$, is analytic on $(\C \backslash \{0\})^2$. It is, incidentally, also analytic as a function of the parameters $a_j$ and $b_j$. In~\cite[Section~3.4]{Bradley-Thrush2023}, the following result on the factorization of~$_r^{}\psi_r^\ast$ with respect to $y$ is obtained; its proof is summarized briefly in Section~\ref{sect:theta} of this paper. This result is implicit in the work of Ito and Sanada. With respect to the variable $y$, they use essentially the same normalization of the $_r\psi_r$ function as is considered here.\footnote{In their terminology, it is the regularized Jackson integral of Jordan--Pochhammer type. (See~\cite[Sections~2.3 and~5.1]{Ito2008}.) Their version lacks the factors of $(x)_\infty$ and $(b_1 \cdots b_r/a_1 \cdots a_r x)_\infty$, but these are immaterial for the statement of Theorem~\ref{thm:general_rpsir_factorization}.} Although they do not state the factorization explicitly, it can be obtained by combining~\cite[Lemma~5.5]{Ito2008} with Lemma~\ref{lem:theta_factorization} of the present paper, which is a classical result.

\begin{Theorem} \label{thm:general_rpsir_factorization}
Let $r$ be a positive integer. There are functions $A$, $\rho_1, \rho_2, \dots, \rho_r$, independent of the variable $y$, such that the function $_r^{}\psi_r^\ast$ admits the factorization
\begin{equation} \label{eqn:general_rpsir_factorization}
_r^{}\psi_r^\ast(x,y) = A(x) \theta\bigg( {y \over \rho_1(x)} \bigg) \theta\bigg( {y \over \rho_2(x)} \bigg) \cdots \theta\bigg( {y \over \rho_r(x)} \bigg).
\end{equation}
Moreover, the functions $\rho_j$ are related by
\begin{equation} \label{eqn:rho_product}
\rho_1(x) \rho_2(x) \cdots \rho_r(x) = {1 \over a_1 a_2 \cdots a_r x}.
\end{equation}
\end{Theorem}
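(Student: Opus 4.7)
The plan is to derive a first-order $q$-difference equation in $y$ for the function $_r^{}\psi_r^\ast(x,y)$, and then to identify its holomorphic solutions with products of theta functions by appealing to Lemma~\ref{lem:theta_factorization}.

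First I would establish a $q$-difference equation for the series $_r\psi_r(x,y)$ itself. Writing $U_n(y) = (a_1y,\dots,a_ry)_n/(b_1y,\dots,b_ry)_n$, the trivial reindexing identity $\sum_n U_n(y)x^n = x\sum_n U_{n+1}(y)x^n$ combined with the recursion $U_{n+1}(y)/U_n(y) = \prod_{j=1}^r (1-a_jyq^n)/(1-b_jyq^n)$ furnishes one expression for $\sum_n \bigl[\prod_j (1-a_jyq^n)/(1-b_jyq^n)\bigr] U_n(y)x^n$. A second expression comes from the direct computation $U_n(qy)/U_n(y) = \prod_{j=1}^r (1-b_jy)(1-a_jyq^n)/[(1-a_jy)(1-b_jyq^n)]$, which rewrites the same sum as $\prod_j \bigl[(1-a_jy)/(1-b_jy)\bigr]\cdot {_r\psi_r(x,qy)}$. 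Equating these yields
\[ x \prod_{j=1}^r (1-a_jy) \cdot {_r\psi_r(x,qy)} = \prod_{j=1}^r (1-b_jy) \cdot {_r\psi_r(x,y)}, \]
valid in the annulus of convergence in $x$ and extending to $(\C\setminus\{0\})^2$ by analytic continuation.

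Next I would translate this into a $q$-difference equation for $_r^{}\psi_r^\ast$. Applying the elementary shift identities $(b_jqy)_\infty = (b_jy)_\infty/(1-b_jy)$ and $(q/(a_jqy))_\infty = -(1-a_jy)(a_jy)^{-1}(q/(a_jy))_\infty$ to the defining relation between $_r\psi_r$ and $_r^{}\psi_r^\ast$, the previous equation collapses after cancellation to the clean quasi-periodicity
\[ {_r^{}\psi_r^\ast(x,qy)} = \frac{(-1)^r}{x\,a_1 a_2\cdots a_r\,y^r}\, {_r^{}\psi_r^\ast(x,y)}. \]
Since $y\mapsto {_r^{}\psi_r^\ast(x,y)}$ is holomorphic on $\C\setminus\{0\}$ for each fixed $x$, Lemma~\ref{lem:theta_factorization} delivers the factorization~\eqref{eqn:general_rpsir_factorization}. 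Finally, using $\theta(qy/\rho) = -(\rho/y)\theta(y/\rho)$, the product $A(x)\prod_{j=1}^r \theta(y/\rho_j(x))$ satisfies $f(qy) = (-1)^r\rho_1(x)\cdots\rho_r(x)\,y^{-r}f(y)$; matching this multiplier with the one above forces~\eqref{eqn:rho_product}.

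The substantive step is the derivation of the $q$-difference equation for $_r\psi_r(x,y)$; once that is in hand, the remainder is essentially mechanical, the only input from the wider theory being the classical theta-factorization Lemma~\ref{lem:theta_factorization}.
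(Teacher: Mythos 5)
Your proposal is correct and follows essentially the same route as the paper: the reindexing $n\mapsto n+1$ together with the relation between $U_n(qy)$ and $U_n(y)$ is exactly the paper's verification that $_r^{}\psi_r^\ast(x,qy) = (-1)^r\,{}_r^{}\psi_r^\ast(x,y)/(a_1\cdots a_r x y^r)$, i.e., that $y\mapsto{}_r^{}\psi_r^\ast(x,y)$ lies in $\Theta_r(a_1\cdots a_r x)$, after which Lemma~\ref{lem:theta_factorization} gives both \eqref{eqn:general_rpsir_factorization} and \eqref{eqn:rho_product}. The only difference is that you spell out the intermediate $q$-difference equation for $_r\psi_r$ itself and the cancellation of the normalizing infinite products, which the paper leaves implicit.
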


It is possible to produce a refinement of Theorem~\ref{thm:general_rpsir_factorization} which reduces the number of unknowns by one, expressing the function $A$ in terms of the functions $\rho_j$. For general values of $r$, this is sketched at the end of the paper. The case $r=2$ is treated in detail in Section~\ref{sect:2psi2_series}. (See Theorem~\ref{thm:2psi2_factorization} below.)

It is easy to check that Theorem~\ref{thm:general_rpsir_factorization} holds for certain special values of $x$ for which the factorization is elementary. One of them is $x=1$; from Abel's theorem \big(in the form which asserts $\lim_{x \rightarrow 1^-} (1-x)\sum_{n=0}^\infty u_n x^n = \lim_{n \rightarrow \infty} u_n$ whenever the second of these two limits exists\big), it follows that
\begin{equation} \label{eqn:rpsir_factorization_at_x=1}
_r\psi_r^\ast(1,y) = {\bigl({b_1\cdots b_r \over a_1\cdots a_r}\bigr)_\infty \over (q)_\infty^{r-1}} \theta(a_1y) \theta(a_2y) \cdots \theta(a_ry).
\end{equation}
Moreover, for any integer $k \geq 0$, the values taken by the functions $A$ and $\rho_j$ are readily determined at the points $x=q^{-k}$ and $x=q^kb_1\cdots b_r/a_1\cdots a_r$.

The functions introduced in Theorem~\ref{thm:general_rpsir_factorization} are known explicitly when a basic hypergeometric series is known to be expressible in closed form in terms of $q$-Pochhammer symbols. For example, Ramanujan's $_1\psi_1$ summation~\eqref{eqn:1psi1} asserts that
\begin{equation} \label{eqn:normalized_1psi1}
_1^{}\psi_1^\ast(x,y) = \left({b_1 \over a_1}\right)_{ \infty} \theta(a_1xy).
\end{equation}
This may be deduced from the observation that, by~\eqref{eqn:rho_product}, the function $\rho_1$ is in this case given by~${\rho_1(x)=1/a_1x}$. The value of $_1\psi_1^\ast(x,q/b_1)$ may then be determined from the $q$-binomial theorem, leading to the conclusion that $A(x)$ is identically equal to $(b_1/a_1)_\infty$, from which~\eqref{eqn:normalized_1psi1} follows. The same proof of~\eqref{eqn:1psi1}, expressed in slightly different terms, may be found in~\cite[Section~1.1]{Ito2014}. No such formula as~\eqref{eqn:1psi1} is known for the sum of the general $_2\psi_2$ series. In general, the problem of determining the $A$ and $\rho_j$ of Theorem~\ref{thm:general_rpsir_factorization} explicitly is a highly non-trivial problem to which only a few isolated solutions are known from classical summation formulae such as~\eqref{eqn:1psi1} and~\eqref{eqn:6psi6}. However, as alluded to above, in the case of the $_2\psi_2$ series the function $A$ is expressible in terms of the function $\rho_1$. Moreover, an explicit formula can be given for $\rho_1$ involving an elliptic integral which contains a ratio of $_2\psi_2$ functions in its upper limit. These are the main results of this paper, stated in the following theorem. A special case, pertaining to the Appell--Lerch function, has been given previously by the author in~\cite[Section~3.8]{Bradley-Thrush2023}.

\begin{Theorem} \label{thm:2psi2_factorization}
In the factorization
\begin{equation} \label{eqn:weak_2psi2ast_factorization}
_2^{}\psi_2^\ast(x,y) = A(x) \theta\left({y \over \rho(x)}\right) \theta \bigl( a_1a_2xy\rho(x) \bigr)
\end{equation}
given by the special case $r=2$ of Theorem~{\rm\ref{thm:general_rpsir_factorization}} $($with $\rho$ written here for $\rho_1)$, the functions $A$ and~$\rho$ are related by the formula
\begin{align}
A(x)^2 = {}&\frac{a_1a_2x \rho(x) \bigl(x,{b_1b_2 \over a_1a_2x},{b_1 \over a_1},{b_1 \over a_2},{b_2 \over a_1},{b_2 \over a_2}\bigr)_{ \infty} }{ ((a_1+a_2)x-b_1-b_2) \theta\bigl({\rho(x/q) \over \rho(x)}\bigr)}\nonumber\\
&\times \frac{ \theta\bigl( {\rho(x/q) \over \rho(qx)} \bigr) \theta( a_1a_2x\rho(x/q)\rho(qx)) }{\theta\bigl({\rho(x) \over \rho(qx)}\bigr) \theta (a_1a_2x\rho(x)\rho(x/q) ) \theta (a_1a_2x\rho(x)\rho(qx) )}.\label{eqn:2psi2_A_rho_relation}
\end{align}
Moreover, the function $\rho$ is given by the formula\footnote{There is an apparent ambiguity in this formula arising from the choice of sign made for $\sqrt{a_1a_2x}$. That no such ambiguity arises may be seen from considering the effect of the substitution $u \mapsto 1/u$ on the integral. The function $\rho$ is, however, multivalued since the elliptic integral itself has this property. (See comments regarding this integral toward the end of Section~\ref{sect:2psi2_series}.) The question of which sign is to be taken for $A$ in~\eqref{eqn:2psi2_A_rho_relation} seems less straightforward, but it may be chosen to agree at $x=1$ with that of the value known from~\eqref{eqn:rpsir_factorization_at_x=1}.}
\begin{align}
\rho(x) = {}&{1 \over \sqrt{a_1a_2x}}\exp \bigg( {1 \over \theta\bigl(\sqrt{q}\bigr)\theta(-\sqrt{q})} \nonumber \\
&\times\int_0^{-{_2^{}\psi_2^\ast(x,1/\sqrt{a_1a_2x}) \over _2^{}\psi_2^\ast(x,-1/\sqrt{a_1a_2x})}} {\dd u \over \sqrt{u\Bigl( 1-{\theta(\sqrt{q})^2 \over \theta(-\sqrt{q})^2}u\Bigr)\Bigl( 1-{\theta(-\sqrt{q})^2 \over \theta(\sqrt{q})^2}u\Bigr)}} \Bigg) \label{eqn:rho_integral_formula}
\end{align}
and satisfies the relation
\begin{gather}
{\theta\bigl({\rho(qx) \over \rho(q^3x)}\bigr) \theta\bigl(a_1a_2x\rho(qx)\rho\bigl(q^3x\bigr)\bigr) \over \theta\bigl({\rho(q^2x) \over \rho(q^3x)}\bigr) \theta\bigl(a_1a_2x\rho\bigl(q^2x\bigr)\rho\bigl(q^3x\bigr)\bigr)} \nonumber\\
\qquad= {\bigl(b_1+b_2-(a_1+a_2)qx\bigr) \bigl(b_1+b_2-(a_1+a_2)q^2x\bigr) \theta\bigl({\rho(qx) \over \rho(x)}\bigr) \theta\bigl(a_1a_2x\rho(x)\rho(qx)\bigr)\over (1-qx)\bigl(b_1b_2-a_1a_2q^2x\bigr) \theta\bigl({\rho(q^2x) \over \rho(x)}\bigr) \theta\bigl(a_1a_2x\rho(x)\rho\bigl(q^2x\bigr)\bigr)}. \label{eqn:rho_functional_equation}
\end{gather}
It is analytic everywhere except at isolated branch points which arise as the solutions of the four equations $_2^{}\psi_2^\ast(x,\pm 1/\sqrt{a_1a_2x})=0$ and $_2^{}\psi_2^\ast\bigl(x,\pm \sqrt{q/a_1a_2x}\bigr)=0$.
\end{Theorem}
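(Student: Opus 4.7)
The plan is to exploit the three-term $q$-difference equation satisfied by $_2^{}\psi_2^\ast$ in the variable $x$. From the ratio of consecutive terms of the defining series~\eqref{eqn:rpsir_def} in the case $r=2$, one obtains, after multiplying through by the appropriate Pochhammer symbols to pass from $_2^{}\psi_2$ to $_2^{}\psi_2^\ast$, a recurrence of the form
\[ P_0(x,y)\,{_2^{}\psi_2^\ast(x,y)} + P_1(x,y)\,{_2^{}\psi_2^\ast(qx,y)} + P_2(x,y)\,{_2^{}\psi_2^\ast\bigl(q^2x,y\bigr)} = 0, \]
whose coefficients $P_i$ are polynomials in $y$ whose overall $x$-dependence accounts for the prefactors $(1-qx)$, $b_1+b_2-(a_1+a_2)qx$, $b_1+b_2-(a_1+a_2)q^2x$, and $b_1b_2-a_1a_2q^2x$ appearing in~\eqref{eqn:rho_functional_equation} and~\eqref{eqn:2psi2_A_rho_relation}. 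Substituting the factorization~\eqref{eqn:weak_2psi2ast_factorization} for each occurrence of $_2^{}\psi_2^\ast$ produces a theta-function identity in $y$ whose coefficients contain $A(q^jx)$ and $\rho(q^jx)$ for $j=0,1,2$.

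To extract~\eqref{eqn:rho_functional_equation}, I would specialize $y$ successively to the six zeros of the three theta-factor products in the substituted identity, namely $y = \rho(q^jx)$ and $y = 1/(a_1a_2q^jx\rho(q^jx))$ for $j=0,1,2$. Each such specialization kills one of the three $_2^{}\psi_2^\ast$-terms and leaves a two-term relation between the other two; taking appropriate ratios of these pairs eliminates $A$ entirely. Applying the same procedure to the recurrence shifted by $x\mapsto qx$ and combining the resulting relations yields an identity among theta values of $\rho(x),\rho(qx),\rho(q^2x),\rho(q^3x)$, which after rearrangement via $\theta(qz)=-z^{-1}\theta(z)$ is~\eqref{eqn:rho_functional_equation}. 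For~\eqref{eqn:2psi2_A_rho_relation}, a complementary combination of two-term relations, one in which $A$ survives, produces $A(x)^2$; the explicit $q$-Pochhammer factors $(x, b_1b_2/a_1a_2x, b_j/a_i)_\infty$ then appear from comparing the overall constant against specializations that are known in closed form, e.g.~the value~\eqref{eqn:rpsir_factorization_at_x=1} at $x=1$, or the limiting case $b_2\to 0$ which reduces $_2^{}\psi_2^\ast$ to $_1^{}\psi_1^\ast$ as given by~\eqref{eqn:normalized_1psi1}.

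The derivation of~\eqref{eqn:rho_integral_formula} proceeds quite differently. The key step is to evaluate~\eqref{eqn:weak_2psi2ast_factorization} at $y = \pm 1/\sqrt{a_1a_2x}$; using the involution $\theta(1/z) = -z^{-1}\theta(z)$ one obtains
\[ -\frac{_2^{}\psi_2^\ast(x,1/\sqrt{a_1a_2x})}{_2^{}\psi_2^\ast(x,-1/\sqrt{a_1a_2x})} = \bigg(\frac{\theta(\sqrt{a_1a_2x}\,\rho(x))}{\theta(-\sqrt{a_1a_2x}\,\rho(x))}\bigg)^{\!2}. \]
Writing $w = \sqrt{a_1a_2x}\,\rho(x)$, the problem of recovering $\rho(x)$ becomes one of inverting the map $w \mapsto [\theta(w)/\theta(-w)]^2$, which is a classical Jacobi-type modular function. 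A direct calculation using the heat equation for $\theta$ shows that, with $u = [\theta(w)/\theta(-w)]^2$,
\[ \frac{dw}{w} = \frac{1}{\theta(\sqrt{q})\theta(-\sqrt{q})} \cdot \frac{du}{\sqrt{u\bigl(1 - u\,\theta(\sqrt{q})^2/\theta(-\sqrt{q})^2\bigr)\bigl(1 - u\,\theta(-\sqrt{q})^2/\theta(\sqrt{q})^2\bigr)}}, \]
the four branch points $u = 0,\infty,\theta(\sqrt q)^{\pm 2}/\theta(-\sqrt q)^{\pm 2}$ being the images of the zeros of $\theta(\pm w)$ at $w = \pm 1, \pm\sqrt q$. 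Integrating from $u=0$ (the point $w=1$) to the value of the above ratio and exponentiating gives~\eqref{eqn:rho_integral_formula}. The analyticity assertion then follows because the upper limit reaches one of the four critical values precisely when one of the four $_2^{}\psi_2^\ast$-values listed in the theorem vanishes.

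The main obstacle is verifying the elliptic-integral identity for $dw/w$; this is in essence the classical statement that $w \mapsto [\theta(w)/\theta(-w)]^2$ is an elliptic function whose logarithmic derivative is algebraic over itself, but pinning down the exact constant $1/[\theta(\sqrt q)\theta(-\sqrt q)]$ demands some care with normalization. By contrast, the derivations of~\eqref{eqn:rho_functional_equation} and~\eqref{eqn:2psi2_A_rho_relation} from the three-term recurrence are largely algebraic manipulations of theta identities, intricate but routine.
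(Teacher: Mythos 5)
Your derivations of the functional equation \eqref{eqn:rho_functional_equation} and of the integral formula \eqref{eqn:rho_integral_formula} follow essentially the paper's route: specializing $y$ at zeros of the theta factors in the three-term recurrence to obtain two-term relations among the $A\bigl(q^jx\bigr)$, and inverting $w\mapsto\theta(w)^2/\theta(-w)^2$ by the classical Jacobi formula. Two smaller points there: the four branch points of the integrand are the critical values of this degree-two elliptic function at the two-torsion points $w\equiv\pm1,\pm\sqrt q$, not ``images of zeros of $\theta(\pm w)$''; and matching the two nonzero finite critical values with the conditions ${}_2\psi_2^\ast\bigl(x,\pm\sqrt{q/a_1a_2x}\bigr)=0$ is not automatic---it requires the two-point interpolation formula for $\Theta_2(a_1a_2x)$, i.e., an instance of \eqref{eqn:Slater_general_linear_bilateral_relation}, as in the last step of the paper's proof.

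The genuine gap is in your derivation of \eqref{eqn:2psi2_A_rho_relation}. The two-term relations coming from the recurrence are all of the form $A\bigl(q^ix\bigr)/A\bigl(q^jx\bigr)=(\text{expression in }\rho)$, and no combination of such ratios can produce the degree-two quantity $A(x)^2$; what they give is a first-order $q$-difference equation for $A$, whose solution is determined only up to an arbitrary $q$-periodic factor in $x$. Your plan to fix ``the overall constant'' by comparison with the value at $x=1$ or the $b_2\to0$ degeneration presupposes that this $q$-periodic factor is constant, which would follow only if the relevant combination---the expression \eqref{eqn:constant_expression_involving_A_and_rho}---were single-valued and analytic on $\C\backslash\{0\}$. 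It is not obviously so: $A$ and $\rho$ are multivalued, with branch points described in the very theorem being proved, and the denominator of \eqref{eqn:constant_expression_involving_A_and_rho} has zeros coming from the $q$-Pochhammer symbols. The paper explicitly identifies this as the failure mode of the naive argument and closes the gap by a different device: it forms the $q$-Wronskian of the two solutions $\theta(x/y)\,{}_2\psi_2^\ast(x,y)$ and $\theta(x/z)\,{}_2\psi_2^\ast(x,z)$, which \emph{is} single-valued and analytic in $x$, factorizes it completely using Lemmas~\ref{lem:theta_root}, \ref{lem:theta_factorization}, \ref{lem:theta_determinant_lemma}, \ref{lem:homogeneous_first-order_q-difference_solution} and~\ref{lem:Wronskian}, and pins down the remaining constant at $y=q/b_1$, $z=q/b_2$, $x=0$ via the $q$-Gauss sum. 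The resulting identity \eqref{eqn:2psi2_Wronskian_factorization} (equivalent to Gustafson's $A_2$ generalization of the ${}_1\psi_1$ summation) is what yields $A(x)A(qx)$ in closed form, and hence $A(x)^2$ upon multiplication by \eqref{eqn:A_rho_relation_3}. Without a substitute for this step your argument does not close.
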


One of the most interesting aspects of the factorization which is the subject of Theorem~\ref{thm:general_rpsir_factorization} is that it provides one explanation for the importance of very-well-poised-balanced (VWP-balanced) series. The factorizations of these series have a special property: Their zeros occur in pairs in such a way that every factor of $\theta(y/\rho_j(x))$ is accompanied in the factorization by a corresponding factor of $\theta(q\rho_j(x)y)$. Moreover, either three or four of the factors are known explicitly depending on whether $r$ is odd or even. These observations are stated precisely in Theorem~\ref{thm:general_rWr_factorization} below.

The notation
\begin{equation} \label{eqn:rWr_def}
_rW_r(y) := \sum_{n=-\infty}^\infty {(a_1 y, a_2 y, \dots, a_{r-2} y)_n \over (qy/a_1, qy/a_2, \dots, qy/a_{r-2})_n} \bigl(1-y^2q^{2n}\bigr) \bigg({q^{r-4 \over 2} \over a_1a_2 \cdots a_{r-2}}\bigg)^{ n}
\end{equation}
is used for VWP-balanced series throughout this paper; the series converges for all non-zero values of $y$ provided that $|a_1\cdots a_{r-2}|>|q|^{(r-4)/2}$. In some ways, it might be more logical to write $_{r-2}W_{r-2}$ for the series in~\eqref{eqn:rWr_def} so that the subscripts would match the number of $q$-Pochhammer symbols on the numerator and denominator of the summand. The notation chosen here is modelled loosely on that of~\cite[Section~2.1]{Gasper2004}, where, effectively, the factor of $1-y^2q^{2n}$ is introduced in the form $\bigl(1-y^2\bigr)(qy,-qy)_n/(y,-y)_n$.

We may write
\begin{equation} \label{eqn:rWrast_def}
_r^{}W_r(y) = {_r^{}W_r^\ast(y) \over \Bigl({q^{r-4 \over 2} \over a_1a_2\cdots a_{r-2}}, {q \over a_1 y}, {qy \over a_1}, {q \over a_2 y}, {qy \over a_2}, \dots, {q \over a_{r-2} y}, {qy \over a_{r-2}}\Bigr)_{ \infty}},
\end{equation}
where the numerator, $_r^{}W_r^\ast(y)$, is a function of $y$ analytic throughout $\C \backslash \{0\}$. The following result, a refinement of Theorem~\ref{thm:general_rpsir_factorization} for the functions $_r^{}W_r^\ast$, is obtained in Section~\ref{sect:theta}. Its derivation is essentially a matter of combining~\cite[Lemma~4.5]{Ito2008} with~\cite[Proposition~3.4]{Rosengren2006}.

\begin{Theorem} \label{thm:general_rWr_factorization}
The functions $_3^{}W_3^\ast$ and $_4^{}W_4^\ast$ are identically zero. If $r \geq 5$ is odd, then $_r^{}W_r^\ast$ takes the form
\begin{align}
_r^{}W_r^\ast(y) ={}& A \theta(y) \theta(-y) \theta(-y\sqrt{q}) \theta\left( {y \over \rho_1} \right) \theta(q\rho_1y) \theta\left( {y \over \rho_2} \right)\nonumber\\
&\times \theta(q\rho_2y) \cdots \theta\bigg( {y \over \rho_{r-5 \over 2}} \bigg) \theta(q\rho_{r-5 \over 2}y). \label{eqn:rWr_factorization_odd_case}
\end{align}
If $r \geq 6$ is even, then $_r^{}W_r^\ast$ takes the form
\begin{equation} \label{eqn:rWr_factorization_even_case}
_r^{}W_r^\ast(y) = A \theta\bigl(y^2\bigr) \theta\left( {y \over \rho_1} \right) \theta( q\rho_1y) \theta\left( {y \over \rho_2} \right) \theta(q\rho_2y) \cdots \theta\bigg( {y \over \rho_{r-6 \over 2}} \bigg) \theta( q\rho_{r-6 \over 2}y).
\end{equation}
In both cases, the value of $A$ as well as the coefficients $\rho_j$ are independent of the variable $y$ {\rm(}although they will depend in general on the parameters $a_j)$.
\end{Theorem}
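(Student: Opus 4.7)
My plan is to derive Theorem \ref{thm:general_rWr_factorization} as a consequence of Theorem \ref{thm:general_rpsir_factorization}, using the very-well-poised symmetry of $_r^{}W_r$ to constrain the theta-function zeros.

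First, I would recast $_r^{}W_r(y)$ as a $_r^{}\psi_r(x,y)$. Writing $1-y^2q^{2n} = (1-y^2)(qy,-qy;q)_n/(y,-y;q)_n$, the series becomes $(1-y^2)\cdot{}_r^{}\psi_r(x,y)$ with $x = q^{(r-4)/2}/(a_1\cdots a_{r-2})$ and parameters $(a_1,\ldots,a_{r-2},q,-q)$ and $(q/a_1,\ldots,q/a_{r-2},1,-1)$. Applying Theorem \ref{thm:general_rpsir_factorization} and comparing the denominators of $_r^{}\psi_r$ and $_r^{}W_r$ yields, after simplifying with $(y,-y;q)_\infty = (y^2;q^2)_\infty$ and the identity $\theta(y)\theta(-y) = (q;q)_\infty^2(y^2,q^2/y^2;q^2)_\infty$, an expression for $_r^{}W_r^\ast(y)$ as a rational combination of $\prod_{j=1}^r\theta(y/\rho_j)$, $\theta(y)$, $\theta(-y)$, and powers of $y$. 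Since $_r^{}W_r^\ast$ is analytic on $\C\setminus\{0\}$, the denominator $\theta(y)\theta(-y)$ must cancel, forcing two of the $\rho_j$'s to coincide with $\pm 1$ modulo $q^\Z$.

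Next I would derive the very-well-poised reflection. Substituting $n\mapsto-n$ in the summand and using $(x)_{-n} = (-1)^n q^{n(n+1)/2}/[x^n(q/x;q)_n]$ gives $_r^{}W_r(y) = -y^2\cdot{}_r^{}W_r(1/y)$; the denominator in \eqref{eqn:rWrast_def} is invariant under $y\mapsto 1/y$, so the same relation holds for $_r^{}W_r^\ast$. This is \cite[Lemma~4.5]{Ito2008} in disguise. Applying this reflection to the theta-product factorization and using $\theta(1/z) = -z^{-1}\theta(z)$, I expect the multiset $\{\rho_1,\ldots,\rho_r\}$ to be closed under $\rho\mapsto 1/\rho$, so generic $\rho$'s pair into factors $\theta(y/\rho)\theta(y\rho)$ proportional (via $\theta(qz) = -z^{-1}\theta(z)$) to the $\theta(y/\rho_j)\theta(q\rho_j y)$ of \eqref{eqn:rWr_factorization_odd_case}--\eqref{eqn:rWr_factorization_even_case}. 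The $\rho = \pm 1$ from the first step are self-paired, giving the explicit factors $\theta(y)$ and $\theta(-y)$.

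The remaining task is to identify the parity-dependent contribution. I would invoke \cite[Proposition~3.4]{Rosengren2006}, which characterizes the space of theta functions with the required quasi-period exponent and reflection symmetry: its dimension depends on the parity of $r$, distinguishing the single explicit factor $\theta(y^2)$ in the even case from the triple $\theta(y)\theta(-y)\theta(-y\sqrt{q})$ in the odd case, with the third factor corresponding to an unpaired $\rho_j = -\sqrt{q}$ whose image under $\rho\mapsto 1/\rho$ coincides with itself modulo multiplication by $q$. For $r\in\{3,4\}$ the dimension becomes non-positive, and a direct check (splitting $_3^{}W_3(y)$ as a difference of two $_1^{}\psi_1$'s, summable by \eqref{eqn:1psi1}, reveals a clean theta cancellation, with an analogous splitting for $_4^{}W_4$) confirms $_3^{}W_3^\ast\equiv 0$ and $_4^{}W_4^\ast\equiv 0$. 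The main obstacle will be pinning down \emph{which} self-conjugate factors appear, particularly the appearance of $\theta(-y\sqrt{q})$ with its specific sign and half-period shift in the odd case and the consolidation into $\theta(y^2)$ in the even case; both hinge on non-trivial theta identities packaged in Rosengren's proposition and in Lemma \ref{lem:theta_factorization}.
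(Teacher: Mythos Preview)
Your proposal shares its core with the paper's proof: both hinge on the reflection ${}_r^{}W_r^\ast(1/y) = -y^{-2}\,{}_r^{}W_r^\ast(y)$ together with the structure theorem for theta functions satisfying such a symmetry (your citation of Rosengren is the paper's Lemma~\ref{lem:Omega_characterization_with_sign_changed}). Where you differ is the opening move. You route through Theorem~\ref{thm:general_rpsir_factorization} by realizing ${}_r^{}W_r$ as a ${}_r\psi_r$ with two extra parameter pairs, then argue that analyticity forces two of the resulting $\rho_j$'s to be $\pm 1$. This is correct but superfluous: those two factors cancel against the $\theta(y)\theta(-y)$ in your denominator and contribute nothing to the final factorization --- the $\theta(y)\theta(-y)$ visible inside $\theta(y^2)$ in the theorem's statement comes from Lemma~\ref{lem:Omega_characterization_with_sign_changed}, not from this cancellation. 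The paper instead reads off directly from the series (via $n\mapsto n+1$ and $n\mapsto -n$) that $f(y)={}_r^{}W_r^\ast(y)/y$ lies in $\Theta_{r-2}\bigl(q^{(r-2)/2}\bigr)$ and satisfies $f(y)=-f(1/y)$; a single application of Lemma~\ref{lem:Omega_characterization_with_sign_changed} (after multiplying by $\theta(y\sqrt{q})$ when $r$ is odd, to make the degree even) then yields the factorization immediately, the identity~\eqref{eqn:theta_squared-argument_identity} converting $y^2\theta(qy^2)$ into the explicit factors. Your zero-pairing heuristic is sound but, as you acknowledge, cannot by itself select among the self-inverse options $\pm 1,\pm\sqrt{q}$; this is exactly what the lemma packages, so once you invoke it your detour through Theorem~\ref{thm:general_rpsir_factorization} has done no real work. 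For $r\in\{3,4\}$ the paper's argument is also cleaner: the relevant space $\Omega_{r-6}(q)$ (even case) or $\Omega_{r-5}(q)$ (odd case) is $\{0\}$ by definition, so vanishing follows from the dimension count alone rather than from a direct ${}_1\psi_1$ evaluation.
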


\begin{Remark}\quad
\begin{itemize}\itemsep=0pt
\item[(i)] The first part of Theorem~\ref{thm:general_rWr_factorization}---the part which asserts that $_3^{}W_3^\ast= {}_4^{}W_4^\ast=0$---is actually a special case of Bailey's $_6\psi_6$ summation~\eqref{eqn:6psi6}, which may be written in the form
\[
_6^{}W_6^\ast(y) = \left({q \over a_1a_2}, {q \over a_1a_3}, {q \over a_1a_4}, {q \over a_2a_3}, {q \over a_2a_4}, {q \over a_3a_4}\right)_{ \infty} \theta\bigl(y^2\bigr).
\]
Since the $_rW_r$ series defined in~\eqref{eqn:rWr_def} reduces when $a_{r-2}=\sqrt{q}$ to the series $_{r-1}W_{r-1}$, the function $_{r-1}^{}W_{r-1}^\ast$ may in general be recovered from the function $_r^{}W_r^\ast$ using the relation
\[ _r^{}W_r^\ast(y) \Big|_{a_{r-2}=\sqrt{q}} = {\theta(y\sqrt{q}) \over (q)_\infty} {}_{r-1}^{}W_{r-1}^\ast(y). \]
Hence, using the elementary relation
\begin{equation} \label{eqn:theta_squared-argument_identity}
\theta\bigl(y^2\bigr) = {1 \over (q)_\infty^3} \theta(y) \theta(-y) \theta(y\sqrt{q}) \theta(-y\sqrt{q}),
\end{equation}
it follows that
\[ {}_5^{}W_5^\ast(y) = {(q)_\infty {}_6^{}W_6^\ast(y) \Big|_{a_4=\sqrt{q}} \over \theta(y\sqrt{q})} = {\bigl({\sqrt{q} \over a_1}, {\sqrt{q} \over a_2}, {\sqrt{q} \over a_3}, {q \over a_1a_2}, {q \over a_1a_3}, {q \over a_2a_3}\bigr)_\infty \over (q)_\infty^2} \theta(y) \theta(-y) \theta(-y\sqrt{q}), \]
and consequently
\[ _4^{}W_4^\ast(y) = {(q)_\infty {}_5^{}W_5^\ast(y) \Big|_{a_3=\sqrt{q}} \over \theta(y\sqrt{q})} = 0, \qquad \text{and} \qquad _3^{}W_3^\ast(y) = {(q)_\infty {}_4^{}W_4^\ast(y) \Big|_{a_2=\sqrt{q}} \over \theta(y\sqrt{q})} = 0. \]
In the proof of Theorem~\ref{thm:general_rWr_factorization} given in Section~\ref{sect:theta}, the fact that $_3^{}W_3^\ast$ and $_4^{}W_4^\ast$ are identically zero is established by a different argument which does not make use of~\eqref{eqn:6psi6}.
\item[(ii)] The special case $r=8$ of Theorem~\ref{thm:general_rWr_factorization} asserts that
\begin{equation} \label{eqn:weak_8psi8ast_factorization}
_8^{}W_8^\ast(y) = A \theta\bigl(y^2\bigr) \theta\left( {y \over \rho} \right) \theta( q\rho y)
\end{equation}
for some $A$ and $\rho$ which are independent of $y$. A relation exists between these two unknowns, each of which may be regarded as a function of the parameter $a_1$. This relation, analogous to the result of Theorem~\ref{thm:2psi2_factorization}, will be developed in a later paper. From Gosper's bilateral Jackson formula (as it is termed by Gasper and Rahman~\cite[Exercise~5.12]{Gasper2004}), it is apparent that the relation between the $A$ and $\rho$ of~\eqref{eqn:weak_8psi8ast_factorization} is simplified significantly when $a_1a_2a_3a_4a_5a_6=q$. Supposing this to be the case, $A$ is given in terms of $\rho$ by the formula
\[ A = {\bigl({q \over a_1a_3},{q \over a_1a_4},{q \over a_1a_5},{q \over a_1a_6},{q \over a_3a_4},{q \over a_3a_5},{q \over a_3a_6},{q \over a_4a_5},{q \over a_4a_6},{q \over a_5a_6}\bigr)_{ \infty} \theta(a_2\xi) \theta\bigl({a_2 \over \xi}\bigr) \over (q,a_1a_2,a_2a_3,a_2a_4,a_2a_5,a_2a_6)_{ \infty} \theta(q\rho\xi)\theta\bigl({\xi \over \rho}\bigr)} ,\]
where
\begin{align*}
\xi={}& {1 \over a_1}\exp \Bigg( {\theta(a_1a_2) \theta\bigl({a_1 \over a_2}\bigr) \over \theta\bigl(a_1^2\bigr) (q)_\infty^3} \\
&\times \int_0^{{a_2\theta(a_1a_3)\theta(a_1a_4)\theta(a_1a_5)\theta(a_1a_6) \over a_1\theta(a_2a_3)\theta(a_2a_4)\theta(a_2a_5)\theta(a_2a_6)}}\frac{1}{\sqrt{\Bigl(1-{a_1\theta(a_2)^2 \over a_2\theta(a_1)^2}u\Bigr) \Bigl(1-{a_1\theta(-a_2)^2 \over a_2\theta(-a_1)^2}u\Bigr)}} \\
&\times {\dd u \over \Bigl(1-{\theta(a_2\sqrt{q})^2 \over \theta(a_1\sqrt{q})^2}u\Bigr) \Bigl(1-{\theta(-a_2\sqrt{q})^2 \over \theta(-a_1\sqrt{q})^2}u\Bigr)} \Bigg).
\end{align*}
\end{itemize}
\end{Remark}

\section{Applications}

Aside from an obvious desire to generalize the classical summation formulae, there is another reason why investigating the general problem of factorization of basic hypergeometric series is particularly worthwhile: It provides some insight into, and in many cases simpler proofs of, a~number of well-known identities. The following example is taken from the author's thesis~\cite[Section~3.6]{Bradley-Thrush2023}. By Theorem~\ref{thm:general_rpsir_factorization}, we know that the numerator, $_2^{}\psi_2^\ast$, of the general $_2\psi_2$ series must take the form~\eqref{eqn:weak_2psi2ast_factorization} for some unknown functions $A$ and $\rho$. The relation between these two functions provided by Theorem~\ref{thm:2psi2_factorization} need not concern us here. Since the function $\theta$ satisfies the elementary identity $\theta(x) = \theta(q/x)$, it follows at once from~\eqref{eqn:weak_2psi2ast_factorization} that
${}_2^{}\psi_2^\ast(x,y) = {}_2^{}\psi_2^\ast(x,q/a_1a_2xy)$.
This is the second of a pair of $_2\psi_2$ transformations found by Bailey~\cite[equations~(2.3) and~(2.4)]{Bailey1950}. His other $_2\psi_2$ transformation does not follow from~\eqref{eqn:weak_2psi2ast_factorization} alone; rather, it follows from a symmetry of the functions $A$ and $\rho$ which is not at all obvious: regarded as (multi-valued) functions of the variables $a_1$, $a_2$, $b_1$, $b_2$, $x$, the functions $A$ and $\rho$ in~\eqref{eqn:weak_2psi2ast_factorization} are both invariant under the substitution $(a_2,b_2,x) \mapsto (a_1a_2x/b_2,a_1x,b_2/a_1)$.

In the previous section, it was observed that Ramanujan's $_1\psi_1$ summation~\eqref{eqn:1psi1} follows immediately from Theorem~\ref{thm:general_rpsir_factorization} and the $q$-binomial theorem. In much the same way, Bailey's~$_6\psi_6$ summation~\eqref{eqn:6psi6} may be obtained from Theorem~\ref{thm:general_rWr_factorization} and the $_6\phi_5$ summation (for which see~\cite[equation~(2.7.1)]{Gasper2004}). Consider that the special case $r=6$ of Theorem~\ref{thm:general_rWr_factorization} asserts that~${_6W_6^\ast(y) = A \theta\bigl(y^2\bigr)}$ for some $A$ which is independent of~$y$. The value of $A$ may then be determined by setting $y=a_1$, since the value of $_6W_6^\ast(a_1)$ is readily obtained from the $_6\phi_5$ summation formula.

A further application of the ideas of this paper is to the derivation of Slater's general transformation formulae for bilateral basic hypergeometric series~\cite{Slater1952}, which may be written in the form
\begin{equation} \label{eqn:Slater_general_linear_bilateral_relation}
_r^{}\psi_r^\ast(x,y) = {1 \over \theta(a_1z_1\cdots a_r z_r x)} \sum_{j=1}^r {}_r^{}\psi_r^\ast(x,z_j) \theta \left({a_1z_1\cdots a_r z_r xy \over z_j} \right) \prod_{\substack{k=1 \\ k \neq j}}^r {\theta\bigl({y \over z_k}\bigr) \over \theta\bigl({z_j \over z_k}\bigr)}
\end{equation}
and, for even $r \geq 6$,
\begin{equation} \label{Slater_VWP_linear_bilateral_relation_even}
{_r^{}W_r^\ast(y) \over \theta\bigl(y^2\bigr)} = \sum_{j=0}^{r-6 \over 2} {_r^{}W_r^\ast(z_j) \over \theta(z_j^2)} \prod_{\substack{k=0 \\ k \neq j}}^{r-6 \over 2} {\theta\bigl({y \over z_k}\bigr) \theta(qz_ky) \over \theta\bigl({z_j \over z_k}\bigr) \theta(qz_jz_k)},
\end{equation}
while for odd $r \geq 5$,
\begin{equation} \label{Slater_VWP_linear_bilateral_relation_odd}
{\theta(y\sqrt{q}) {}_r^{}W_r^\ast(y) \over \theta\bigl(y^2\bigr)} = \sum_{j=0}^{r-5 \over 2} {\theta(z_j\sqrt{q}) {}_r^{}W_r^\ast(z_j) \over \theta\bigl(z_j^2\bigr)} \prod_{\substack{k=0 \\ k \neq j}}^{r-5 \over 2} {\theta\bigl({y \over z_k}\bigr) \theta(qz_ky) \over \theta\bigl({z_j \over z_k}\bigr) \theta(qz_jz_k)}.
\end{equation}
As explained in Remarks~\ref{rmk:Slater_general_linear_bilateral_relation} and~\ref{rmk:Slater_VWP_linear_bilateral_relations}, these follow directly from lemmas in the following section, which imply that, as functions of $y$, both $_r^{}\psi_r^\ast(x,y)$ and $_r^{}W_r^\ast(y)$ belong to spaces of finite dimension. Essentially the same derivation of these identities is given by Ito and Sanada~\cite[Section~3.1]{Ito2008}.

\section{Theta functions} \label{sect:theta}

This section contains several lemmas which are needed in Section~\ref{sect:2psi2_series}. With the possible exception of Lemmas~\ref{lem:homogeneous_first-order_q-difference_solution} and~\ref{lem:Wronskian}, all of the results are classical. Lemma~\ref{lem:theta_factorization}, for example, may be found in the book of Briot and Bouquet~\cite[p.~239]{Briot1875}, albeit in a rather different notation. A more modern statement of it appears in~\cite[Lemma~3.2]{Rosengren2006}; functions belonging to the space $\Theta_n(c)$ defined below are there called $A_{n-1}$ theta functions. A special case of Lemma~\ref{lem:theta_determinant_lemma} was given by Appell~\cite[p.~140]{Appell1884}.

\begin{Definition}
Let $n$ be any integer and let $c$ be any non-zero complex number. Let $\Theta_n(c)$ denote the space of analytic functions $f\colon \C \backslash \{0\} \rightarrow \C$ which satisfy identically the relation
\begin{equation} \label{eqn:nth_order_theta_functional_equation}
f(qx) = {(-1)^n f(x) \over cx^n}.
\end{equation}
\end{Definition}

\begin{Lemma}[classical] \label{lem:Theta_dimension}
Let $n$ be an integer and let $c$ be any non-zero complex number. If $n$ is negative, then $\Theta_n(c) = \{0\}$. If $n$ is positive, then $\dim \Theta_n(c) = n$. If $n=0$, then there are two possibilities:
\[ \Theta_0(c) = \begin{cases} \big\{f \mid f(x) = A x^{-k} \text{ for some } A \in \C\big\} &\text{if $c = q^k$ for some integer $k$,} \\
0 &\text{otherwise.}
\end{cases} \]
\end{Lemma}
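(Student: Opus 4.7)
My plan is to translate the functional equation~\eqref{eqn:nth_order_theta_functional_equation} into a recursion on the coefficients of the Laurent expansion of~$f$. Since every $f \in \Theta_n(c)$ is analytic on $\C \setminus \{0\}$, it admits a globally convergent Laurent series $f(x) = \sum_{k \in \Z} a_k x^k$; substituting this into~\eqref{eqn:nth_order_theta_functional_equation} and equating coefficients of $x^k$ yields the two-term relation $a_{k+n} = (-1)^n c q^k a_k$. Iterating gives the closed form
\[
a_{k+jn} = (-1)^{jn} c^j q^{kj + nj(j-1)/2} a_k, \qquad j \in \Z.
\]
Analyticity on $\C \setminus \{0\}$ is equivalent to the convergence condition $|a_k|^{1/|k|} \to 0$ as $|k| \to \infty$, and the interplay between this condition and the quadratic $q$-exponent above drives all three cases.

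Suppose first that $n > 0$. The integers partition into $n$ orbits modulo $n$, and within each orbit the closed form shows that $|a_{k+jn}|$ behaves like $|q|^{nj^2/2}$ for $|j|$ large, which decays super-exponentially because $|q| < 1$. Hence any assignment of the $n$ initial values $a_0, a_1, \ldots, a_{n-1}$ extends uniquely to an element of $\Theta_n(c)$, giving $\dim \Theta_n(c) = n$. If $n < 0$, writing $n = -m$ with $m > 0$, the same formula produces $|a_{k-jm}| \sim |q|^{-mj^2/2}$ as $j \to +\infty$, which is incompatible with the convergence requirement unless every $a_k$ vanishes, so $\Theta_n(c) = \{0\}$. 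For $n = 0$, the recursion collapses to $a_k (1 - cq^k) = 0$: if $c$ is not an integer power of $q$ no $a_k$ can be nonzero, while if $c = q^{k_0}$ the only surviving coefficient is $a_{-k_0}$, so $\Theta_0(q^{k_0}) = \C \cdot x^{-k_0}$.

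None of the individual steps is technically deep. The one point that deserves explicit care is the correspondence between super-exponential decay of Laurent coefficients and analyticity on $\C \setminus \{0\}$; once that is in place, the quadratic-in-$j$ exponent in the closed form cleanly separates the three regimes. The remaining subtlety is purely cosmetic: in the case $n = 0$, one must note that $cq^k = 1$ with $c = q^{k_0}$ has solution $k = -k_0$, which is precisely the exponent appearing in the statement of the lemma.
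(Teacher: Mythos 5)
The paper states this lemma as classical and gives no proof of its own, so there is nothing to compare against directly; your argument therefore has to stand on its own, and it does. Translating \eqref{eqn:nth_order_theta_functional_equation} into the coefficient recursion $a_{k+n}=(-1)^n c\sp q^k a_k$ on the Laurent expansion, iterating to get the quadratic-in-$j$ exponent, and invoking the equivalence between analyticity on $\C\backslash\{0\}$ and the decay condition $|a_k|^{1/|k|}\rightarrow 0$ is the standard proof of this result, and each of your three cases is handled correctly: for $n>0$ the exponent $nj^2/2$ guarantees convergence for arbitrary initial data on the $n$ residue classes, for $n<0$ it forces blow-up of the coefficients unless they all vanish, and for $n=0$ the relation $a_k(1-cq^k)=0$ together with $0<|q|<1$ (so $q^m=1$ only for $m=0$) isolates the single monomial $x^{-k}$ when $c=q^k$. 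No gaps.
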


\begin{Remark}
In particular, Lemma~\ref{lem:Theta_dimension} asserts that $\Theta_0(1)$ consists only of constant functions, i.e., that any solution of $f(qx)=f(x)$ which is analytic on $\C\backslash\{0\}$ must be constant. This special case is well known and widely used in proofs of $q$-series identities.
\end{Remark}

\begin{Lemma} \label{lem:theta_root}
Let $n$ be any integer, let $c$ be any non-zero complex number, and let $f \in \Theta_n(c)$. If $\rho$ is a non-zero complex number such that $f(\rho)=0$, then $f(x) = \theta(x/\rho)g(x)$ for some $g \in \Theta_{n-1}(c\rho)$.
\end{Lemma}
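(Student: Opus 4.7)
The plan is to propagate the single zero at $\rho$ through the $q$-lattice using the functional equation, then divide out by $\theta(x/\rho)$ and verify the resulting quotient lies in the expected space.

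First I would recall the quasi-periodicity $\theta(qx)=-\theta(x)/x$, an immediate consequence of the product definition $\theta(x)=(x,q/x,q)_\infty$ together with $(qx)_\infty=(x)_\infty/(1-x)$ and $(1/x)_\infty=(1-1/x)(q/x)_\infty$. Comparing with~\eqref{eqn:nth_order_theta_functional_equation}, this says $\theta(x/\rho)\in\Theta_1(1/\rho)$. I would also recall that the only zeros of $\theta$ on $\C\setminus\{0\}$ occur at the points $q^k$, $k\in\Z$, and are all simple; equivalently, $\theta(x/\rho)$ has simple zeros exactly at $x=\rho q^k$.

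Next I would use the functional equation satisfied by $f$ to spread the hypothesis $f(\rho)=0$ across the whole $q$-orbit of $\rho$. Setting $x=\rho$ in~\eqref{eqn:nth_order_theta_functional_equation} gives $f(q\rho)=0$, and iterating yields $f(\rho q^k)=0$ for every $k\geq 0$. For negative $k$, rewriting the functional equation as $f(x)=(-1)^n c x^n f(qx)$ and applying it inductively starting at $x=\rho/q$ gives $f(\rho q^k)=0$ for $k\leq 0$ as well. Thus $f$ vanishes at every point where $\theta(x/\rho)$ does.

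I would then define $g(x):=f(x)/\theta(x/\rho)$. Away from the points $\rho q^k$ the function $\theta(x/\rho)$ is non-vanishing and analytic, so $g$ is analytic there. At each point $\rho q^k$, the denominator has a simple zero while the numerator has a zero of order at least one, so the singularity is removable; hence $g$ extends to an analytic function on $\C\setminus\{0\}$. A direct computation using $\theta(qx/\rho)=-\rho\sp\theta(x/\rho)/x$ and~\eqref{eqn:nth_order_theta_functional_equation} gives
\[
g(qx)=\frac{f(qx)}{\theta(qx/\rho)}=\frac{(-1)^nf(x)/(cx^n)}{-\rho\sp\theta(x/\rho)/x}=\frac{(-1)^{n-1}g(x)}{c\rho\sp x^{n-1}},
\]
which is exactly the defining relation for $\Theta_{n-1}(c\rho)$.

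The argument presents no serious obstacle; the only point that needs a word of care is the case $n\leq 0$, for which Lemma~\ref{lem:Theta_dimension} forces $\Theta_{n-1}(c\rho)=\{0\}$ (and indeed $\dim\Theta_n(c)\leq 1$, so an $f$ with infinitely many zeros is necessarily identically zero, making the factorization trivial). The statement of the lemma remains correct in this degenerate case.
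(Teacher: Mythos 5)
Your proof is correct and follows essentially the same route as the paper's: propagate the zero $f(\rho)=0$ to the whole $q$-orbit $\rho q^k$ via the functional equation, divide by $\theta(x/\rho)$ (the singularities being removable since the zeros of $\theta$ are simple), and verify directly that the quotient satisfies the defining relation of $\Theta_{n-1}(c\rho)$. Your added remarks on the quasi-periodicity of $\theta$ and on the degenerate case $n\leq 0$ are correct but not needed beyond what the paper records.
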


\begin{proof}
From the relation~\eqref{eqn:nth_order_theta_functional_equation}, it follows that $f(\rho)=0$ implies $f\bigl(\rho q^k\bigr)=0$ for every integer~$k$. Consequently the function $g$ defined by $g(x) = f(x)/\theta(x/\rho)$ is analytic throughout $\C \backslash \{0\}$. Since
\[ g(qx) = {(-1)^n f(x) / cx^n \over (-\rho/x) \theta(x/\rho)} = {(-1)^{n-1} g(x) \over c\rho x^{n-1}}, \]
it follows that $g \in \Theta_{n-1}(c\rho)$.
\end{proof}

\begin{Lemma}[classical] \label{lem:theta_factorization}
Let $n$ be a positive integer and let $c$ be any non-zero complex number. Then $f \in \Theta_n(c)$ if and only if
$f(x) = A \sp \theta(\alpha_1x) \theta(\alpha_2x) \cdots \theta(\alpha_nx)$
for some $A \in \C$ and some~${\alpha_1, \alpha_2,\dots,\alpha_n \in \C\backslash\{0\}}$ such that $\alpha_1\alpha_2 \cdots \alpha_n = c$.
\end{Lemma}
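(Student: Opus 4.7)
The plan is to argue by induction on $n$, using Lemmas \ref{lem:Theta_dimension} and \ref{lem:theta_root} as the main inputs. Before setting this up, I would dispose of the ``if'' direction by a direct check: for any finite collection of non-zero complex numbers $\alpha_1,\dots,\alpha_n$ with $\alpha_1\cdots\alpha_n=c$, the identity $\theta(q\alpha_j x)=-\theta(\alpha_j x)/(\alpha_j x)$ (itself immediate from the definition $\theta(x)=(x,q/x,q)_\infty$) propagates through the product to give $A\prod_j \theta(\alpha_j x)\big|_{x\to qx}=(-1)^n A\prod_j\theta(\alpha_j x)/(cx^n)$, which is exactly \eqref{eqn:nth_order_theta_functional_equation}.

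For the ``only if'' direction, the base case $n=1$ is handled by Lemma \ref{lem:Theta_dimension}: that lemma gives $\dim \Theta_1(c)=1$, and a direct verification (as above) shows $\theta(cx)\in\Theta_1(c)$; since this is a non-zero element, every element of $\Theta_1(c)$ is of the form $A\theta(cx)$, and we take $\alpha_1=c$.

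For the inductive step, suppose the result for $n-1$ with $n\ge 2$, and let $f\in\Theta_n(c)$ be non-zero. The key observation I would make is that $f$ must vanish somewhere on $\C\setminus\{0\}$. Indeed, if $f$ had no zeros, then $1/f$ would be analytic on $\C\setminus\{0\}$, and dividing the functional equation $f(qx)=(-1)^n f(x)/(cx^n)$ gives $(1/f)(qx)=(-1)^n c x^n(1/f)(x)$, which is precisely the relation defining $\Theta_{-n}(1/c)$. But Lemma \ref{lem:Theta_dimension} asserts $\Theta_{-n}(1/c)=\{0\}$ for $n\ge 1$, contradicting $1/f\ne 0$. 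Hence there exists $\rho\in\C\setminus\{0\}$ with $f(\rho)=0$.

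Now Lemma \ref{lem:theta_root} applies, yielding $f(x)=\theta(x/\rho)g(x)$ for some $g\in\Theta_{n-1}(c\rho)$. By the inductive hypothesis, $g(x)=A\sp\theta(\beta_1 x)\cdots\theta(\beta_{n-1}x)$ with $\beta_1\cdots\beta_{n-1}=c\rho$. Setting $\alpha_1=1/\rho$ and $\alpha_j=\beta_{j-1}$ for $2\le j\le n$, we obtain the required factorization $f(x)=A\sp\theta(\alpha_1 x)\cdots\theta(\alpha_n x)$ with $\alpha_1\cdots\alpha_n=(1/\rho)(c\rho)=c$, completing the induction. The only subtle step is the existence of a zero of $f$, and the $1/f$ trick reduces it cleanly to the previously established fact that $\Theta_m(\,\cdot\,)$ is trivial for negative $m$, so I do not expect any real obstacle here.
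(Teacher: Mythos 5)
Your proof is correct, but note that the paper itself offers no proof of this lemma: it is labelled ``classical'' and referred to Briot--Bouquet and to \cite[Lemma~3.2]{Rosengren2006}, so there is no in-paper argument to compare against. What you have written is a clean, self-contained derivation from the two preceding lemmas. The ``if'' direction via $\theta(qx)=-\theta(x)/x$ is exactly the standard computation. The crux of the ``only if'' direction is the existence of a zero, and your reduction --- if $f$ never vanishes then $1/f\in\Theta_{-n}(1/c)=\{0\}$ by Lemma~\ref{lem:Theta_dimension} --- is valid and arguably slicker than the usual argument-principle count of zeros in a fundamental annulus $|q|<|x|\le 1$; after that, Lemma~\ref{lem:theta_root} peels off one theta factor per step and the product condition $\alpha_1\cdots\alpha_n=c$ is tracked correctly through $c\mapsto c\rho$. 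Two small remarks: you should say explicitly that the identically zero function is covered by taking $A=0$ (your induction only treats non-zero $f$), and you should be aware that your argument still rests on the unproved classical Lemma~\ref{lem:Theta_dimension} (both for $\dim\Theta_1(c)=1$ and for $\Theta_{-n}=\{0\}$), so it is a proof relative to that lemma rather than from scratch --- which is a perfectly reasonable thing to do given how the paper is organized.
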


\begin{proof}[Proof of Theorem~\ref{thm:general_rpsir_factorization}]
By Lemma~\ref{lem:theta_factorization}, it suffices to verify that, regarded as a function of the variable $y$, $_r^{}\psi_r^\ast(x,y)$ belongs to the space $\Theta_r(a_1a_2\cdots a_r x)$. Since this function is analytic on~${\C \backslash\{0\}}$, it suffices to check that it satisfies the relation
\[ _r^{}\psi_r^\ast(x,qy) = {(-1)^r {}_r^{}\psi_r^\ast(x,y) \over a_1a_2\cdots a_r x y^r}. \]
This is easily verified by changing $n \mapsto n+1$ in the series in~\eqref{eqn:rpsir_def}.
\end{proof}

\begin{Remark} \label{rmk:Theta2_relation}
If $f \in \Theta_2(c)$, then $f$ satisfies identically the relation $f(x) = f(q/cx)$. The reason for this is that, by Lemma~\ref{lem:theta_factorization}, any such function $f$ may be written in the form $f(x) = A\theta(\alpha x) \theta(cx/\alpha)$.
\end{Remark}

\begin{Remark} \label{rmk:Slater_general_linear_bilateral_relation}
Let $n$ be a positive integer, and $c$ any non-zero complex number. Given any $z_1,z_2,\dots,z_n \in \C \backslash\{0\}$, for $1 \leq j \leq n$ let
\[ \vartheta_j(x) = {\theta\bigl({cz_1 \cdots z_n x \over z_j}\bigr) \over \theta(cz_1\cdots z_n)} \prod_{\substack{k=1 \\ k \neq j}}^n {\theta\bigl({x \over z_k}\bigr) \over \theta\bigl({z_j \over z_k}\bigr)}. \]
Provided that the numbers $z_j$ are such that the denominators of these functions are not zero, the relation $\vartheta_j(z_k)=\delta_{j,k}$ holds for all pairs of integers $j$, $k$ in the interval $[1,n]$. These $n$ functions are therefore linearly independent of one another and so form a basis for $\Theta_n(c)$. Consequently, any given $f \in \Theta_n(c)$ may be expanded in the form
\begin{equation} \label{eqn:general_Theta_expansion}
f(x) = \sum_{j=1}^n f(z_j) \vartheta_j(x).
\end{equation}
This expansion is equivalent to a special case of a formula given by Appell~\cite[equations~(8) and~($8'$)]{Appell1884}. As is commented upon in~\cite[p.~88]{Bradley-Thrush2023} and in~\cite[Section~5.1]{Ito2008}, Slater's identity~\eqref{eqn:Slater_general_linear_bilateral_relation} is an instance of~\eqref{eqn:general_Theta_expansion}.
\end{Remark}

The results stated in Lemmas~\ref{lem:omega_dim}, \ref{lem:Omega_characterization}, and~\ref{lem:Omega_characterization_with_sign_changed} relate to a particular subspace of $\Theta_{2n}(c^n)$, introduced in Definition~\ref{def:Omega} below. These results may be found in~\cite[Lemma~4.3]{Ito2008} and~\cite[Lemma~3.2]{Rosengren2006}.\footnote{The original version of this paper, written before the author became aware of~\cite{Ito2008} and \cite{Rosengren2006}, included proofs of these results from scratch. These have been removed from the present version as the proofs in those other papers are shorter and more direct.} In the terminology used there, the functions within the subspace are said to be~$D_{n+1}$ theta functions, and functions satisfying the conditions of Lemma~\ref{lem:Omega_characterization_with_sign_changed} are said to be~$C_{n-1}$ theta functions. In~\cite{Ito2008} and \cite{Rosengren2006}, essentially the characterization given in Lemma~\ref{lem:Omega_characterization} is taken as the definition of the subspace in question.

\begin{Definition} \label{def:Omega}
Let $n$ be a positive integer and let $c$ be any non-zero complex number. Let~$\Omega_{2n}(c)$ denote the subset of $\Theta_{2n}(c^n)$ consisting of functions $f$ which take the form
\[
f(x) = \prod_{j=1}^n f_j(x)
\]
with each $f_j \in \Theta_2(c)$. It is convenient also to define $\Omega_0(c)$ to be the space of constant functions (i.e., $\Theta_0(1)$), and further to define $\Omega_{2n}(c) = \{0\}$ for every $n<0$.
\end{Definition}

\begin{Lemma} \label{lem:omega_dim}
Let $n$ be a non-negative integer and $c$ any non-zero complex number. Then the set $\Omega_{2n}(c)$ is in fact a subspace of $\Theta_{2n}(c^n)$, and its dimension equals $n+1$.
\end{Lemma}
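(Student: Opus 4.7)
The plan is to identify $\Omega_{2n}(c)$ with the subspace
\[ V = \bigl\{ f \in \Theta_{2n}(c^n) : f(x) = f(q/cx) \text{ for all } x \in \C \backslash \{0\} \bigr\}. \]
Since $V$ is manifestly closed under linear combinations, establishing $\Omega_{2n}(c) = V$ will simultaneously prove that $\Omega_{2n}(c)$ is a subspace of $\Theta_{2n}(c^n)$ and reduce the dimension calculation to one inside $\Theta_{2n}(c^n)$.

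The inclusion $\Omega_{2n}(c) \subseteq V$ is immediate from Remark~\ref{rmk:Theta2_relation}: each factor $f_j \in \Theta_2(c)$ satisfies $f_j(x) = f_j(q/cx)$, and the symmetry is preserved under products. For the reverse inclusion I would induct on $n$, with $n = 0$ and $n = 1$ trivial (note $V = \Theta_2(c) = \Omega_2(c)$ when $n=1$, again by Remark~\ref{rmk:Theta2_relation}). For the inductive step, take a nonzero $f \in V$; by Lemma~\ref{lem:theta_factorization} it admits a zero $\rho$, and the symmetry forces $q/c\rho$ to be a zero as well. Lemma~\ref{lem:theta_root} gives $f(x) = \theta(x/\rho) g_1(x)$ with $g_1 \in \Theta_{2n-1}(c^n\rho)$. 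The functional equation defining $\Theta_{2n}(c^n)$ gives $f(1/c\rho) = f(q\rho) = f(\rho)/(c^n\rho^{2n}) = 0$, whence $g_1(1/c\rho) = 0$, and a second application of Lemma~\ref{lem:theta_root} yields $g_1(x) = \theta(c\rho x) h(x)$ with $h \in \Theta_{2n-2}(c^{n-1})$. The extracted factor $\theta(x/\rho)\theta(c\rho x)$ lies in $\Theta_2(c)$ (as $(1/\rho)(c\rho) = c$) and itself satisfies the symmetry, so the quotient $h$ lies in the analogous space $V$ for index $n-1$. By the inductive hypothesis $h \in \Omega_{2(n-1)}(c)$, whence $f \in \Omega_{2n}(c)$.

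For $\dim V$ I would use the basis construction of Remark~\ref{rmk:Slater_general_linear_bilateral_relation}, with interpolation points adapted to the involution $\sigma \colon x \mapsto q/cx$. Choose the $2n$ points to consist of the two fixed points $\pm\sqrt{q/c}$ of $\sigma$ together with $n-1$ orbit pairs $\{z_j, q/cz_j\}$, selected generically so the associated basis $\{\vartheta_j\}$ of $\Theta_{2n}(c^n)$ is well-defined. Expanded in this basis, a function lies in $V$ precisely when, for each pair, the coefficients on $\vartheta_{z_j}$ and $\vartheta_{q/cz_j}$ agree; the coefficients at the two fixed points are unconstrained. This imposes $n-1$ independent linear conditions, giving $\dim V = 2n - (n-1) = n+1$.

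The main obstacle lies in the inductive step: in degenerate configurations the zeros $\rho$ and $1/c\rho$ could collapse onto the same $q$-orbit (i.e.\ $c\rho^2 \in q^{\Z}$, so that $\theta(1/c\rho^2) = 0$ and the evaluation of $g_1$ at $1/c\rho$ is indeterminate), or $\rho$ could itself be a fixed point of $\sigma$, in which case naively dividing by $\theta(x/\rho)\theta(c\rho x)$ would introduce a spurious pole. Both issues are resolvable either by continuity (since $\Omega_{2n}(c)$ and $V$ depend algebraically on $c$ and on the zero structure of $f$ and agree on a dense open set of parameters) or by a direct multiplicity count using that any $f \in V$ must have even-order zeros at the fixed points of $\sigma$, so that the factor $\theta(x/\rho)\theta(c\rho x)$ with $\rho = \pm\sqrt{q/c}$ divides $f$ without remainder.
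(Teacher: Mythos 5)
The paper itself supplies no proof of this lemma: it defers to Ito--Sanada and Rosengren--Schlosser, where the symmetry $f(x)=f(q/cx)$ is taken as the \emph{definition} of the space. Your strategy---prove $\Omega_{2n}(c)=V$ and then compute $\dim V$ by interpolation---is therefore essentially the standard route, and its skeleton is sound. The forward inclusion, the generic inductive step (extracting $\theta(x/\rho)\theta(c\rho x)\in\Theta_2(c)$ from a zero $\rho$ and its partner $q/c\rho$ via two applications of Lemma~\ref{lem:theta_root}), and the final count $2n-(n-1)=n+1$ are all correct.

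Two steps need to be filled in. First, in the dimension count the word ``precisely'' hides the only nontrivial direction: you must explain why agreement of the paired coefficients forces the identity $f(x)=f(q/cx)$ for \emph{all} $x$, not merely at the interpolation points. The clean way is to check (a one-line computation with the functional equation~\eqref{eqn:nth_order_theta_functional_equation}) that $x\mapsto f(q/cx)$ again lies in $\Theta_{2n}(c^n)$; then both $f$ and $f\circ\sigma$ expand in the basis $\{\vartheta_j\}$ with coefficients equal to their values at the $z_k$, these values agree at all $2n$ points (at the pairs by hypothesis, at the fixed points automatically), and equality follows. Without this observation the ``if'' direction is unsupported, and with it you should also verify that your points give a legitimate basis, i.e., $c^nz_1\cdots z_{2n}=-q^n\notin q^{\Z}$, which fortunately holds. (Note also that modulo $q^{\Z}$ the involution $\sigma$ has four self-paired classes, $\pm\sqrt{q/c}$ and $\pm1/\sqrt{c}$, not two; your argument survives because the basis-expansion route does not need them, but a zero-counting route would.) Second, for the degenerate cases in the induction, the multiplicity argument is the right fix---$\sigma$ acts near a self-paired zero as an involution with derivative $-1$, forcing even order, so $\theta(x/\rho)\theta(c\rho x)$ divides $f$ exactly---and should be the one you commit to; the continuity alternative is not obviously workable, since $\Omega_{2n}(c)$ is the image of a multilinear map and there is no a priori reason for it to be closed. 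With these repairs the proof is complete, and as a bonus it establishes Lemma~\ref{lem:Omega_characterization} along the way.
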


\begin{Remark}\quad
\begin{itemize}\itemsep=0pt
\item[(i)] Although $\Omega_{2n}(c) = \Theta_{2n}(c^n)$ for every $n \leq 1$, the inclusion $\Omega_{2n}(c) \subset \Theta_{2n}(c^n)$ is strict for every $n \geq 2$ by Lemmas~\ref{lem:Theta_dimension} and~\ref{lem:omega_dim}.
\item[(ii)] For $n \geq 1$, it is apparent that $f \in \Omega_{2n}(c)$ if and only if there is some $c' \neq 0$ such that $f(x) = g(x) g(q/cx)$ for some $g \in \Theta_n(c')$. Another characterization of the space $\Omega_{2n}(c)$ is given in the following lemma.
\end{itemize}
\end{Remark}

\begin{Lemma} \label{lem:Omega_characterization}
Let $n$ be any integer, let $c$ be a non-zero complex number, and let $f \in \Theta_{2n}(c^n)$. Then $f \in \Omega_{2n}(c)$ if and only if the relation
\begin{equation} \label{eqn:2nth_order_theta_reciprocal_relation}
f(x) = f(q/cx)
\end{equation}
holds identically.
\end{Lemma}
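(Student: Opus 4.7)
\medskip

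The plan is to prove the two implications separately, with the nontrivial direction handled by induction on $n$.

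For the forward direction ($\Rightarrow$), I would argue as follows. Suppose $f \in \Omega_{2n}(c)$ so that $f(x) = \prod_{j=1}^n f_j(x)$ with each $f_j \in \Theta_2(c)$. By Remark~\ref{rmk:Theta2_relation}, every element of $\Theta_2(c)$ satisfies $f_j(x) = f_j(q/cx)$, and this property is preserved by products, giving~\eqref{eqn:2nth_order_theta_reciprocal_relation} immediately. For negative $n$ both $\Omega_{2n}(c)$ and $\Theta_{2n}(c^n)$ are $\{0\}$ and there is nothing to prove; for $n=0$ the function $f$ is constant by Lemma~\ref{lem:Theta_dimension} and satisfies the reciprocal relation trivially.

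For the reverse direction ($\Leftarrow$), I would induct on $n \geq 1$. Suppose $f \in \Theta_{2n}(c^n)$ satisfies $f(x) = f(q/cx)$; without loss of generality $f \not\equiv 0$. By Lemma~\ref{lem:theta_factorization}, $f$ has at least one zero, say at $x = \rho$. The reciprocal relation then forces $f(q/c\rho)=0$ as well, and the quasi-periodicity~\eqref{eqn:nth_order_theta_functional_equation} propagates these zeros to the two lattices $\{\rho q^m\}$ and $\{q^{m+1}/(c\rho)\}$. I would then set
\[ f_n(x) = \theta(x/\rho)\sp \theta(c\rho x), \]
which lies in $\Theta_2(c)$ (the product of exponents being $c$, as in Lemma~\ref{lem:theta_factorization}) and vanishes precisely on these two lattices. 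The candidate quotient $g(x) = f(x)/f_n(x)$ is then analytic on $\C\setminus\{0\}$, and a direct calculation of $g(qx)/g(x)$ using the functional equations of $f$ and $f_n$ shows that $g \in \Theta_{2n-2}(c^{n-1})$. Moreover $g$ inherits the reciprocal relation, since $f_n$ does by the forward direction already established. The induction hypothesis then writes $g$ as a product of $n-1$ factors in $\Theta_2(c)$, and multiplying by $f_n$ places $f$ in $\Omega_{2n}(c)$.

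The main obstacle I expect is the case where $\rho$ is a fixed point of the involution $x \mapsto q/cx$, i.e., $\rho^2 = q/c$. Here the two prescribed zeros of $f_n$ collapse: using $\theta(qx) = -\theta(x)/x$ we find $\theta(c\rho x) = \theta(qx/\rho) = -(\rho/x)\theta(x/\rho)$, so $f_n$ acquires a double zero at $\rho$. To see that $f/f_n$ is still analytic there, I would observe that the substitution $x = \rho e^t$ converts the reciprocal relation $f(x)=f(q/cx)$ into $f(\rho e^t) = f(\rho e^{-t})$, so the Taylor expansion of $f(\rho e^t)$ contains only even powers of $t$. Consequently, if $f(\rho)=0$ then $f$ has a zero of order at least two at $\rho$, matching the order of $f_n$ and guaranteeing analyticity of the quotient. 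The remaining bookkeeping — namely that zeros on the lattice $\{-\sqrt{q/c}\sp q^m\}$ receive the same treatment, and that no parity obstruction prevents continuing the induction — then follows along the same lines.
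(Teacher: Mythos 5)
The paper does not actually prove this lemma---it is imported from \cite[Lemma~4.3]{Ito2008} and \cite[Lemma~3.2]{Rosengren2006}---so your argument has to stand on its own. Its overall shape is sound: the forward direction via Remark~\ref{rmk:Theta2_relation} is correct, and in the reverse direction the verifications that $g=f/f_n$ lies in $\Theta_{2n-2}\bigl(c^{n-1}\bigr)$ and inherits the reciprocal relation are fine, so the induction closes \emph{provided} the quotient is analytic.

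That is where there is a genuine gap in your case analysis. The two zero-lattices $\{\rho q^m\}$ and $\bigl\{q^{m+1}/(c\rho)\bigr\}$ of $f_n(x)=\theta(x/\rho)\theta(c\rho x)$ collapse into one whenever $c\rho^2\in q^{\Z}$, not only when $\rho^2=q/c$. If $c\rho^2\in q^{2\Z+1}$ the lattice contains a fixed point of $x\mapsto q/cx$ and your even-Taylor-expansion argument applies (after translating $\rho$ along its lattice to that fixed point). But if $c\rho^2\in q^{2\Z}$, i.e., $\rho\in\pm q^{\Z}/\sqrt{c}$, the lattice is carried to itself by the involution yet contains no fixed point: taking $\rho=1/\sqrt{c}$, the substitution $x=\rho\ee^t$ turns the reciprocal relation into $f(\rho\ee^t)=f(q\rho\ee^{-t})$, not $f(\rho\ee^t)=f(\rho\ee^{-t})$, so nothing you have written excludes a simple zero of $f$ there, while $f_n=\theta(\sqrt{c}\,x)^2$ has a double one. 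This case cannot be waved away as bookkeeping: it actually occurs \big(e.g., $f(x)=\theta(\sqrt{c}x)^2\theta(-\sqrt{c}x)^2\in\Theta_4\bigl(c^2\bigr)$ has all of its zeros on these lattices\big), and it is precisely where the sign in the hypothesis matters, since $x\theta\bigl(cx^2\bigr)\in\Theta_4\bigl(c^2\bigr)$ has simple zeros on these same lattices but satisfies $f(q/cx)=-f(x)$ and belongs to the complementary class of Lemma~\ref{lem:Omega_characterization_with_sign_changed}. The repair is your own argument pushed one step further: combining $f(x)=f(q/cx)$ with the quasi-periodicity $f(qy)=f(y)/\bigl(c^ny^{2n}\bigr)$ gives, at $\rho=\pm1/\sqrt{c}$,
\begin{equation*}
f\bigl(\rho\ee^t\bigr)=\ee^{2nt}f\bigl(\rho\ee^{-t}\bigr),
\end{equation*}
the constant being $\bigl(c\rho^2\bigr)^{-n}=1$; if $f\bigl(\rho\ee^t\bigr)=a_kt^k+O\bigl(t^{k+1}\bigr)$ with $a_k\neq0$, this forces $a_k=(-1)^ka_k$, so $k$ is even and the quotient is analytic. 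With that case added, your proof is complete.
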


\begin{Lemma} \label{lem:Omega_characterization_with_sign_changed}
Let $n$ be any integer, let $c$ be a non-zero complex number, and let $f \in \Theta_{2n}(c^n)$. Then $f$ satisfies identically the relation
\begin{equation} \label{eqn:2nth_order_theta_reciprocal_relation_with_minus}
f(x) = -f(q/cx)
\end{equation}
if and only if $f(x) = x\theta\big(cx^2\big) g(x)$ for some $g \in \Omega_{2n-4}(c)$.
\end{Lemma}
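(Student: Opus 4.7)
The approach is to verify both directions by explicit manipulation of the auxiliary function $h(x) := x\theta(cx^2)$. Two key facts drive the proof: first, applying $\theta(qy) = -\theta(y)/y$ twice gives $h(qx) = h(x)/(c^2x^4)$, so that $h \in \Theta_4(c^2)$; second, combining the symmetry $\theta(y) = \theta(q/y)$ with $\theta(qy) = -\theta(y)/y$ yields $h(q/cx) = -h(x)$.

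For the ``if'' direction, suppose $g \in \Omega_{2n-4}(c)$. By Lemma~\ref{lem:Omega_characterization}, $g \in \Theta_{2n-4}(c^{n-2})$ and $g(q/cx) = g(x)$. The product $f := hg$ therefore satisfies $f \in \Theta_{4 + (2n-4)}(c^2 \cdot c^{n-2}) = \Theta_{2n}(c^n)$, and $f(q/cx) = -h(x) g(x) = -f(x)$, as required.

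For the converse, given $f \in \Theta_{2n}(c^n)$ with $f(x) = -f(q/cx)$, the crux is to show that $g := f/h$ is analytic on $\C \setminus \{0\}$, i.e., that $f$ absorbs every (simple) zero of $h$. These zeros lie at $x = \pm q^{k/2}/\sqrt{c}$ for $k \in \Z$. At the fixed points $\pm\sqrt{q/c}$ of the involution $x \mapsto q/cx$, the sign-change relation immediately forces $f(\pm\sqrt{q/c}) = 0$. At the points $z = \pm 1/\sqrt{c}$ one instead has $q/(cz) = qz$ and $c^n z^{2n} = 1$, so the $\Theta_{2n}(c^n)$-relation gives $f(qz) = f(z)$ while the sign-change reads $f(z) = -f(qz)$; combining yields $f(\pm 1/\sqrt{c}) = 0$. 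The quasi-periodicity of $\Theta_{2n}(c^n)$ then propagates these four zeros to their full $q$-orbits, which exhaust all the simple zeros of $h$.

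Finally, dividing quasi-periodicities places $g = f/h \in \Theta_{2n-4}(c^{n-2})$, and dividing the sign-change relations gives $g(q/cx) = (-f(x))/(-h(x)) = g(x)$; hence Lemma~\ref{lem:Omega_characterization} places $g \in \Omega_{2n-4}(c)$. The degenerate cases $n \leq 1$ collapse to $f = 0$: for $n < 0$ by Lemma~\ref{lem:Theta_dimension}; for $n = 0$ by combining the sign-change with the constancy of elements of $\Theta_0(1)$; and for $n = 1$ by combining the sign-change with the identity $f(x) = f(q/cx)$ automatic for elements of $\Theta_2(c)$ (Remark~\ref{rmk:Theta2_relation}). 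These are consistent with the convention $\Omega_{2n-4}(c) = \{0\}$ in those ranges. The principal difficulty is locating the ``hidden'' zeros at $\pm 1/\sqrt{c}$, which are not forced by the sign-change alone (since those points are not fixed by the involution) but emerge from its interaction with the $\Theta_{2n}(c^n)$ multiplier, which evaluates to $1$ at exactly these points.
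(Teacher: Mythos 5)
Your proof is correct. Note that the paper itself gives no proof of this lemma: a footnote explains that the from-scratch proofs of Lemmas~\ref{lem:omega_dim}--\ref{lem:Omega_characterization_with_sign_changed} were removed in favour of citations to Ito--Sanada and Rosengren--Schlosser, so there is nothing in the text to compare against line by line. Your argument is a sound self-contained treatment, and it is essentially the standard one found in those references. The computations $h(qx)=h(x)/\bigl(c^2x^4\bigr)$ and $h(q/cx)=-h(x)$ for $h(x)=x\theta\bigl(cx^2\bigr)$ are right, the ``if'' direction follows immediately from Lemma~\ref{lem:Omega_characterization}, and you correctly identify the one genuinely delicate point of the converse: the zeros of $h$ at $\pm\sqrt{q/c}$ are forced on $f$ by the sign change alone (fixed points of the involution), whereas those at $\pm1/\sqrt{c}$ require playing the sign change against the $\Theta_{2n}(c^n)$ quasi-periodicity, whose multiplier $1/\bigl(cx^2\bigr)^n$ equals $1$ there. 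Since all zeros of $h$ are simple and lie on the $q$-orbits of these four points, $g=f/h$ is analytic on $\C\backslash\{0\}$, and dividing the two functional equations lands $g$ in $\Theta_{2n-4}\bigl(c^{n-2}\bigr)$ with $g(x)=g(q/cx)$, whence $g\in\Omega_{2n-4}(c)$ by Lemma~\ref{lem:Omega_characterization}. Your handling of the degenerate cases $n\leq1$ also matches the Remark following the lemma. The only quibble is cosmetic: the closing claim that the multiplier equals $1$ ``at exactly these points'' should be read as ``exactly at $\pm1/\sqrt{c}$ among the four base points''; it plays no role in the logic.
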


\begin{Remark}
The special case $n=1$ of Lemma~\ref{lem:Omega_characterization_with_sign_changed} asserts that any function $f \in \Theta_2(c)$ satisfying the relation~\eqref{eqn:2nth_order_theta_reciprocal_relation_with_minus} must be identically zero. This is apparent from the fact that~\eqref{eqn:2nth_order_theta_reciprocal_relation} holds for every $f \in \Theta_2(c)$.
\end{Remark}

\begin{proof}[Proof of Theorem~\ref{thm:general_rWr_factorization}]
The function $_r^{} W_r^\ast$ defined by~\eqref{eqn:rWrast_def} satisfies the two relations
\[
_r^{}W_r^\ast(qy) = {(-1)^r {}_r^{}W_r^\ast(y) \over q^{r-4 \over 2} y^{r-2}}, \qquad{} _r^{}W_r^\ast(1/y) = -{ {}_r^{}W_r^\ast(y) \over y^2}.
\]
These are easily obtained by changing $n \mapsto n+1$ and $n \mapsto -n$ in the series in~\eqref{eqn:rWr_def}. For the remainder of this proof, let us put $f(y) = {}_r^{}W_r^\ast(y)/y$. Then the first equation shows that $f$ belongs to the space $\Theta_{r-2}\bigl(q^{(r-2)/2}\bigr)$, while the second, which we may rewrite as $f(y)=-f(1/y)$, allows us to infer that $f$ belongs in fact to a subspace thereof which we may determine.

Consider first the case in which $r$ is even. Then, by Lemma~\ref{lem:Omega_characterization_with_sign_changed}, it must be that $f(y)=y\theta\bigl(qy^2\bigr)g(y)$ for some $g \in \Omega_{r-6}(q)$. If $r=4$, then this space contains only the function which is identically zero, so it follows that $_4^{}W_4^\ast(y)=0$ for all $y$. If, on the other hand, $r \geq 6$, then, per Definition~\ref{def:Omega}, we may write \smash{$g(y) = \prod_{j=1}^{(r-6)/2} g_j(y)$}, where each of the functions $g_j$ belongs to the space $\Theta_2(q)$ and therefore, by Lemma~\ref{lem:theta_factorization}, admits a factorization of the form~${g_j(y) = A_j \theta(y/\rho_j)\theta(q\rho_j y)}$. Hence
\[ {_r^{}W_r^\ast(y) \over y} = y\theta\bigl(qy^2\bigr) \prod_{j=1}^{r-6 \over 2} A_j \theta\left({y \over \rho_j}\right)\theta(q\rho_j y). \]
This is exactly the form of the factorization specified in~\eqref{eqn:rWr_factorization_even_case}, the constant $A$ there being equal to \smash{$-\prod_{j=1}^{(r-6)/2} A_j$} in the notation used here.

Consider next the case in which $r$ is odd, and put $F(y)\! =\! \theta(y\sqrt{q}) f(y)$. Then $F \!\in \!\Theta_{r-1}\bigl(q^{(r-1)/2}\bigr)$ and, since $\theta(y\sqrt{q})$ is invariant under $y \mapsto 1/y$, this function $F$ satisfies $F(y)=-F(1/y)$. Another application of Lemma~\ref{lem:Omega_characterization_with_sign_changed} allows us to conclude that $F(y) = y\theta\bigl(qy^2\bigr)g(y)$ for some $g \in \Omega_{r-5}(q)$. Hence $_3^{}W_3^\ast$ is identically zero and, for odd $r \geq 5$,
\[ {\theta(y\sqrt{q}) {}_r^{}W_r^\ast(y) \over y} = y\theta\bigl(qy^2\bigr) \prod_{j=1}^{r-5 \over 2} A_j \theta\left({y \over \rho_j}\right)\theta(q\rho_j y). \]
In view of the identity~\eqref{eqn:theta_squared-argument_identity}, this is exactly the form of the factorization specified in~\eqref{eqn:rWr_factorization_odd_case}.
\end{proof}

\begin{Remark} \label{rmk:Slater_VWP_linear_bilateral_relations}
Let $n$ be a positive integer and let $c$ be a non-zero complex number. Given any~${n+1}$ non-zero complex numbers $z_0, z_1,\dots, z_n$, for $0 \leq j \leq n$ let
\[ \omega_j(x) = \prod_{\substack{k=0 \\ k \neq j}}^n {\theta(x/z_k) \theta(cz_kx) \over \theta(z_j/z_k) \theta(cz_jz_k)}. \]
Then, provided that the $z_j$ are such that the denominators are non-zero, $\omega_j \in \Omega_{2n}(c)$ for each $j$. Moreover, these functions $\omega_j$ are linearly independent since $\omega_j(z_k) = \delta_{j,k}$ for all integers $j$, $k$ in the interval $[0,n]$. By Lemma~\ref{lem:omega_dim}, these functions form a basis for $\Omega_{2n}(c)$. Hence any $f \in \Omega_{2n}(c)$ may be expressed in the form
\begin{equation} \label{eqn:Omega_basis_expansion}
f(x) = \sum_{j=0}^n f(z_j) \omega_j(x).
\end{equation}
In the proof of Theorem~\ref{thm:general_rWr_factorization}, it has been shown that, when $r$ is even, the function $_r^{}W_r^\ast(y)/\theta\bigl(y^2\bigr)$ belongs to the space $\Omega_{r-6}(q)$, and that, when $r$ is odd, the function $\theta(y\sqrt{q}){}_r^{}W_r^\ast(y)/\theta\bigl(y^2\bigr)$ belongs to the space $\Omega_{r-5}(q)$. In either case, this allows the function $_r^{}W_r^\ast$ to be expanded with respect to the basis functions $\omega_j$ just described; doing so yields Slater's identities~\eqref{Slater_VWP_linear_bilateral_relation_even} and~\eqref{Slater_VWP_linear_bilateral_relation_odd}.

The expansion~\eqref{eqn:Omega_basis_expansion} may be used to obtain a formula valid for all $f \in \Theta_n(c')$, where $c'$ may denote any non-zero complex number. Given such a function $f$, it is straightforward to see that the function $f(x)f(q/cx)$ belongs to the space $\Omega_{2n}(c)$, and so
\[ f(x)f\left({q \over cx}\right) = \sum_{j=0}^n f(z_j)f\left({q \over cz_j}\right) \omega_j(x), \]
with the basis functions $\omega_j$ as above. In particular, this implies the identity
\[ {}_r^{}\psi_r^\ast(x,y) {}_r^{}\psi_r^\ast\left(x,{q \over cy}\right) = \sum_{j=0}^r {}_r^{}\psi_r^\ast(x,z_j) {}_r^{}\psi_r^\ast\left(x,{q \over cz_j}\right) \prod_{\substack{k=0 \\ k \neq j}}^r {\theta(y/z_k) \theta(cz_ky) \over \theta(z_j/z_k) \theta(cz_jz_k)}, \]
valid for every positive integer $r$.
\end{Remark}

\begin{Definition}
For each positive integer $n$, let $\Delta_n\colon (\C \backslash \{0\})^n \rightarrow \C$ denote the product
\[ \Delta_n(x_1,x_2,\dots,x_n) = \prod_{1 \leq j < k \leq n} x_k \theta\left( {x_j \over x_k} \right), \]
with the convention that the function $\Delta_1$ (defined by an empty product) is identically equal to~$1$.
\end{Definition}

The following result can be found in~\cite[Proposition~3.4]{Rosengren2006}. Its proof is straightforward: since~$\Delta_n$ is anti-symmetric, the ratio of the two sides of~\eqref{eqn:q-Vandermonde_determinant} is a symmetric function of~$x_1, x_2, \allowbreak\dots, x_n$; moreover, it follows easily from Lemmas~\ref{lem:theta_root} and~\ref{lem:theta_factorization} that this ratio is independent of $x_1$.

\begin{Lemma} \label{lem:theta_determinant_lemma}
Let $n$ be a positive integer, $c$ any non-zero complex number, and let $f_1,f_2,\dots,f_n\allowbreak \in \Theta_n(c)$. Then there is some constant $A \in \C$ such that the identity
\begin{equation} \label{eqn:q-Vandermonde_determinant}
\det_{1 \leq i,j \leq n}(f_i(x_j))
= A \Delta_n(x_1,x_2,\dots,x_n) \theta(cx_1x_2 \cdots x_n)
\end{equation}
holds for all $x_1,x_2,\dots,x_n \in \C \backslash\{0\}$.
\end{Lemma}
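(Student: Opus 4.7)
The plan is to follow exactly the two observations flagged at the end of the excerpt: first establish that the quotient of the two sides is symmetric, then show that it is independent of $x_1$, and conclude that it is a constant.

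Write $D(x_1,\dots,x_n) = \det(f_i(x_j))$. As a function of $x_1$ alone, expanding the determinant along the first column shows that $D$ is a linear combination of $f_1(x_1),\dots,f_n(x_1)$ with coefficients independent of $x_1$; hence $D(\cdot,x_2,\dots,x_n)$ lies in $\Theta_n(c)$. By the obvious column-swap, $D$ is anti-symmetric in $(x_1,\dots,x_n)$. A short computation using the identity $\theta(x) = -x\sp\theta(1/x)$ shows that $\Delta_n$ is also anti-symmetric, while $\theta(cx_1\cdots x_n)$ is symmetric. Thus the ratio $R := D/\bigl(\Delta_n(x_1,\dots,x_n)\sp\theta(cx_1\cdots x_n)\bigr)$ is symmetric in $x_1,\dots,x_n$, which is the first observation.

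Next I show that $R$ is independent of $x_1$. For $j \geq 2$, the columns of the determinant corresponding to $x_1 = x_j$ coincide, so $D$ vanishes there. Viewing $D$ as an element of $\Theta_n(c)$ in $x_1$, Lemma~\ref{lem:theta_root} lets me write $D(x_1,\dots,x_n) = \theta(x_1/x_2) g_1(x_1)$ with $g_1 \in \Theta_{n-1}(cx_2)$; this $g_1$ still vanishes at $x_1=x_3$, so another application of Lemma~\ref{lem:theta_root} strips off $\theta(x_1/x_3)$, and iterating gives
\[ D(x_1,\dots,x_n) = \theta(x_1/x_2) \theta(x_1/x_3)\cdots\theta(x_1/x_n)\sp h(x_1), \]
where $h \in \Theta_1(cx_2\cdots x_n)$ (with coefficients depending on $x_2,\dots,x_n$). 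By Lemma~\ref{lem:theta_factorization}, $h(x_1) = B(x_2,\dots,x_n)\sp\theta(cx_1x_2\cdots x_n)$ for some scalar $B$. On the other hand, separating the factors of $\Delta_n$ that involve $x_1$ gives
\[ \Delta_n(x_1,\dots,x_n) = \Delta_{n-1}(x_2,\dots,x_n) \prod_{j=2}^n x_j\sp\theta(x_1/x_j), \]
so the product of theta factors in $D$ matches the $x_1$-dependent part of $\Delta_n$ up to a factor independent of $x_1$. Dividing gives $R(x_1,\dots,x_n) = B(x_2,\dots,x_n)/\bigl(\Delta_{n-1}(x_2,\dots,x_n) \prod_{j=2}^n x_j\bigr)$, which is manifestly independent of $x_1$.

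Finally, $R$ is symmetric and independent of $x_1$, so for every $i$, interchanging $x_1 \leftrightarrow x_i$ in the identity ``$R$ does not depend on its first argument'' shows $R$ does not depend on $x_i$ either; hence $R$ is a constant $A$, as required. The only step that is not entirely routine is the iterated application of Lemma~\ref{lem:theta_root}, which requires checking at each stage that after removing one theta factor the remaining function continues to vanish at the next point $x_j$; this follows because at the stage where $\theta(x_1/x_k)$ is stripped off, the factors previously removed, evaluated at $x_1=x_{k+1}$, are generically nonzero, so the vanishing of $D$ at $x_1=x_{k+1}$ passes to the quotient.
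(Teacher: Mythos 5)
Your argument is correct and follows precisely the route sketched in the paper: anti-symmetry of the determinant and of $\Delta_n$ makes the ratio of the two sides symmetric, while iterated application of Lemmas~\ref{lem:theta_root} and~\ref{lem:theta_factorization} in the variable $x_1$ shows that the ratio is independent of $x_1$, whence by symmetry it is constant. You have simply supplied the details (including the genericity caveat needed to iterate Lemma~\ref{lem:theta_root}, and the final extension by analyticity) that the paper leaves to the reader.
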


\begin{Lemma} \label{lem:homogeneous_first-order_q-difference_solution}
Let $n$ be any integer and let $c$ be any non-zero complex number. Let $\varphi_1$ and $\varphi_2$ be any two entire functions with the following properties:
\begin{enumerate}\itemsep=0pt
\item[$(i)$] $\varphi_1(0)=\varphi_2(0)=1$;
\item[$(ii)$] 
if $\alpha,\beta \in \C$ satisfy $\varphi_1(\alpha)=\varphi_2(\beta)=0$, then $\alpha \beta \neq q^m$ for any integer $m$.
\end{enumerate}
Then an analytic function $f\colon\C\backslash\{0\}\rightarrow\C$ satisfies identically the relation
\begin{equation} \label{eqn:homogeneous_first-order_q-difference}
f(qx) = {(-1)^n \varphi_2(1/x) \over cx^n \varphi_1(x)} f(x)
\end{equation}
if and only if
\begin{equation} \label{eqn:homogeneous_first-order_q-difference_solution}
f(x) = g(x) \prod_{m=1}^\infty \varphi_1\bigl(xq^{m-1}\bigr) \varphi_2(q^m/x)
\end{equation}
for some $g \in \Theta_n(c)$.
\end{Lemma}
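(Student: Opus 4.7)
The plan is to exhibit an explicit particular solution of the $q$-difference equation~\eqref{eqn:homogeneous_first-order_q-difference}, namely
\[ P(x) := \prod_{m=1}^\infty \varphi_1\bigl(xq^{m-1}\bigr)\varphi_2(q^m/x), \]
and then to show that every analytic solution $f$ differs from $P$ by a factor in $\Theta_n(c)$. First I would verify that $P$ is well defined and analytic on $\C\setminus\{0\}$: since $\varphi_1(0)=\varphi_2(0)=1$ and $|q|<1$, the factors satisfy $\varphi_1(xq^{m-1}) - 1 = O(|q|^{m})$ and $\varphi_2(q^m/x) - 1 = O(|q|^m)$ as $m \to \infty$, uniformly on compact subsets of $\C\setminus\{0\}$, so the product converges uniformly on compacta. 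An index shift ($m\mapsto m+1$ in the $\varphi_1$ factor and $m\mapsto m-1$ in the $\varphi_2$ factor) yields
\[ P(qx) = \frac{\varphi_2(1/x)}{\varphi_1(x)}\,P(x). \]
Combined with the defining relation of $\Theta_n(c)$, this immediately gives the \textbf{if} direction: for $g\in\Theta_n(c)$, the product $f = gP$ satisfies~\eqref{eqn:homogeneous_first-order_q-difference}.

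For the \textbf{only if} direction I would set $g(x) := f(x)/P(x)$, a priori only meromorphic on $\C\setminus\{0\}$; dividing the functional equations for $f$ and $P$ shows that $g$ satisfies $g(qx) = (-1)^n g(x)/(cx^n)$ wherever both sides are defined. The main obstacle is to show that $g$ is actually analytic throughout $\C\setminus\{0\}$, whereupon $g\in\Theta_n(c)$ by definition. This is exactly where hypothesis~(ii) is essential. At any zero $\alpha$ of $\varphi_1$, condition~(ii) applied to the pair $(\alpha,1/\alpha)$ (giving $\alpha\beta = q^0$) forces $\varphi_2(1/\alpha) \neq 0$, so the coefficient on the right of~\eqref{eqn:homogeneous_first-order_q-difference} has a pole at $x=\alpha$ of order exactly $\operatorname{ord}_\alpha\varphi_1$; since $f(q\alpha)$ must be finite, $f$ vanishes at $\alpha$ to at least this order. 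Iterating the functional equation along the forward orbit $\{\alpha q^k : k\geq 0\}$ and writing $\nu_k := \operatorname{ord}_{\alpha q^k} f$ and $e_k := \operatorname{ord}_{\alpha q^k}\varphi_1$, one obtains $\nu_{k+1} = \nu_k - e_k$ (condition~(ii), applied with pair $(\alpha,q^{-k}/\alpha)$, again rules out any compensating zero of $\varphi_2$). The requirement $\nu_k \geq 0$ for all $k$ therefore forces $\nu_0 \geq \sum_{k\geq 0} e_k = \operatorname{ord}_\alpha P$, exactly the order of vanishing needed to cancel $P$. An analogous argument starting from a zero $\beta$ of $\varphi_2$ handles the orbits $\{q^{k+1}/\beta : k\geq 0\}$, and condition~(ii) guarantees that the two families of orbits cannot collide in the zero set of $P$. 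Hence $g = f/P$ extends analytically over every zero of $P$, completing the proof.
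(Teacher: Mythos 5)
Your proof is correct and follows essentially the same route as the paper's: establish convergence and analyticity of the infinite product on $\C\backslash\{0\}$, verify the ``if'' direction by a direct index shift, and prove the ``only if'' direction by tracking orders of vanishing of $f$ along the $q$-orbits of the zeros of $\varphi_1$ and $\varphi_2$, with condition (ii) ruling out any cancellation between the two families. The only cosmetic difference is that you phrase the telescoping as the recursion $\nu_{k+1}=\nu_k-e_k$ --- which you should note is valid for every integer $k$, not only on the forward orbit, so that the resulting lower bound propagates from $\alpha$ to every zero $\alpha q^{-j}$ of the product --- whereas the paper writes the identical computation using an explicit multiplicity function $M(\cdot,\cdot)$ and a telescoping sum.
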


\begin{proof}
First, observe that the product in~\eqref{eqn:homogeneous_first-order_q-difference_solution} defines a function analytic throughout $\C\backslash\{0\}$. To verify this, put
\begin{equation} \label{eqn:Phi_def}
\Phi(x) = \prod_{m=1}^\infty \varphi_1\bigl(xq^{m-1}\bigr) \varphi_2(q^m/x)
\end{equation}
and let $r_1$ and $r_2$ be any two positive numbers such that $r_2 < r_1$. The two functions $(\varphi_1(x)-1)/x$ and $(\varphi_2(x)-1)/x$ are entire. They are therefore bounded on any given disk about the origin. It is therefore possible to choose a positive constant $C$ such that $|\varphi_1(x)-1| \leq C|x|$ for $|x| \leq r_1$ and $|\varphi_2(x)-1| \leq C|x|$ for $|x| \leq |q|/r_2$. Hence, if $x$ is such that $r_2 \leq |x| \leq r_1$, then, for any positive integer $m$,
$\big| \varphi_1\bigl(xq^{m-1}\bigr) - 1 \big| \leq Cr_1|q|^{m-1}$ and $|\varphi_2(q^m/x) - 1| \leq C|q|^m/r_2$.
It follows from this that the series $\sum_{m=1}^\infty \big|\varphi_1\bigl(xq^{m-1}\bigr) - 1\big|$ and $\sum_{m=1}^\infty |\varphi_2(q^m/x) - 1|$ converge uniformly on the annulus $r_2 \leq |x| \leq r_1$. The product in~\eqref{eqn:Phi_def} is therefore uniformly convergent on the same region, so the function $\Phi$ is analytic there. Since $r_1$ and $r_2$ are arbitrary (except for the condition $r_2<r_1$), it follows that $\Phi$ is analytic on $\C\backslash\{0\}$.

Now suppose that~\eqref{eqn:homogeneous_first-order_q-difference_solution} holds for some $g \in \Theta_n(c)$. By the foregoing, the function $f$ is analytic throughout $\C\backslash\{0\}$. Since $\Phi(qx) = \varphi_2(1/x) \Phi(x) / \varphi_1(x)$ and $g(qx) = (-1)^n g(x)/cx^n$, it is plain that $f$ must satisfy the relation~\eqref{eqn:homogeneous_first-order_q-difference}.

Suppose next, conversely, that the relation~\eqref{eqn:homogeneous_first-order_q-difference} holds. It is convenient to introduce here the notation $M(\alpha,\phi)$ for the multiplicity of a given complex number $\alpha$ as a zero of a given function~$\phi$, with the understanding that $M(\alpha,\phi)=0$ if $\phi(\alpha) \neq 0$. Suppose that $\alpha$ is a zero of the function~$\varphi_1$. Since $\varphi_1(0)=1$ and $\varphi_1$ is continuous at the origin, the value of $M(\alpha q^m,\varphi_1)$ must be zero for all sufficiently large integer values of $m$. Let $k$ be the greatest integer such that~$M\bigl(\alpha q^{k},\varphi_1\bigr)>0$. Then plainly, on account of condition~(ii),
\[ M\bigl(\alpha q^j,\Phi\bigr) = \sum_{m=j}^k M(\alpha q^m,\varphi_1) \]
for every integer $j$, with the understanding that this sum is to be interpreted as zero whenever~${j>k}$. Also, for any integer $m$, the relation~\eqref{eqn:homogeneous_first-order_q-difference} implies that
\[ M\bigl(\alpha q^{m+1},f\bigr) + M(\alpha q^m,\varphi_1) = M(\alpha q^m,f) + M(q^{-m}/\alpha,\varphi_2). \]
By condition~(ii), the value of $M(q^{-m}/\alpha,\varphi_2)$ is zero. Consequently, for any integer $j$ such that~${j \leq k}$,
\begin{align*}
 M\bigl(\alpha q^j,f\bigr) &= M\bigl(\alpha q^{k+1},f\bigr) + \sum_{m=j}^{k} \bigl( M(\alpha q^m,f) - M\bigl(\alpha q^{m+1},f\bigr) \bigr) \geq \sum_{m=j}^{k} M(\alpha q^m,\varphi_1)\\
 & = M\bigl(\alpha q^j,\Phi\bigr).
 \end{align*}
Hence the ratio $f/\Phi$ is analytic at $x=\alpha q^j$ for every integer $j$. Now suppose that $\beta$ is a zero of the function $\varphi_2$. As before, it is possible to choose a non-negative integer $\ell$ maximal such that~${M\bigl(\beta q^\ell,\varphi_2\bigr)>0}$. Condition~(ii) implies that, for every integer $j$,
\[ M\bigl(q^j/\beta,\Phi\bigr) = \sum_{m=1-j}^{\ell} M(\beta q^{m},\varphi_2). \]
As before, it should be understood here that the sum equals zero whenever it is empty, i.e., when $j < 1-\ell$. For any integer $m$, the relation~\eqref{eqn:homogeneous_first-order_q-difference} yields
\[ M\bigl(q^{1-m}/\beta,f\bigr) + M(q^{-m}/\beta,\varphi_1) = M(\beta q^m,\varphi_2) + M(q^{-m}/\beta,f), \]
and again $M(q^{-m}/\beta,\varphi_1)=0$ on account of condition~(ii), so for any $j$ such that $j \leq 1-\ell$,
\begin{align*}
M\bigl(q^j/\beta,f\bigr) &= M\bigl(q^{-\ell}/\beta,f\bigr) + \sum_{m=1-j}^{\ell} \bigl( M\bigl(q^{1-m}/\beta,f\bigr) - M(q^{-m}/\beta,f) \bigr) \\
&\geq \sum_{m=1-j}^{\ell} M(\beta q^m,\varphi_2) = M\bigl(q^j/\beta,\Phi\bigr).
\end{align*}
Hence $f/\Phi$ is analytic at $x=q^j/\beta$ for every integer $j$. Hence $f/\Phi$ is in fact analytic throughout the region $\C\backslash\{0\}$. Setting $g=f/\Phi$, this function $g$ satisfies the relation
\[ g(qx) = {f(qx) \over \Phi(qx)} = {(-1)^n \varphi_2(1/x) f(x) / (cx^n \varphi_1(x)) \over \varphi_2(1/x) \Phi(x)/\varphi_1(x)} = {(-1)^n g(x) \over cx^n}, \]
so $g \in \Theta_n(c)$ as required.
\end{proof}

The following elementary lemma has been given in a slightly different but equivalent form by Abu Risha et al.\ in~\cite[Theorem~3.3]{AbuRisha2007}. There the $q$-Wronskian is defined in such a way that its elements are $q$-derivatives, so as to make it a $q$-analogue of the Wronskian for an ordinary differential equation. The $q$-Wronskian as it is defined below is readily seen to be related to the one given there by row and column operations.

\begin{Lemma} \label{lem:Wronskian}
Let $n$ be a positive integer. If the functions $f_1, f_2, \dots, f_n$ are such that
\[ \sum_{m=0}^n c_m(x) f_j(xq^m) = 0 \]
for $j=1,2,\dots,n$, then their $q$-Wronskian, $\mathcal{W}(x) = \det_{1\leq i,j \leq n} \bigl( f_i\bigl(xq^{j-1}\bigr) \bigr)$, satisfies the relation
\[ \mathcal{W}(qx) = {(-1)^n c_0(x) \over c_n(x)} \mathcal{W}(x). \]
\end{Lemma}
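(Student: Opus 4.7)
The plan is to establish the identity by performing elementary column operations on the determinant defining $\mathcal{W}(qx)$, followed by a cyclic permutation of columns to reconcile with $\mathcal{W}(x)$. First I would rewrite
\[ \mathcal{W}(qx) = \det_{1\leq i,j\leq n}\bigl( f_i\bigl(xq^j\bigr) \bigr), \]
so that the $n$ columns are indexed by the evaluations at $xq, xq^2, \dots, xq^n$. Note that the first $n-1$ columns of this matrix coincide with the last $n-1$ columns of the matrix defining $\mathcal{W}(x)$; the only discrepancy is that $\mathcal{W}(qx)$ contains the column $(f_i(xq^n))_{i=1}^n$ where $\mathcal{W}(x)$ contains $(f_i(x))_{i=1}^n$.

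Next I would use the hypothesis to eliminate the offending column. Solving $\sum_{m=0}^n c_m(x) f_i(xq^m)=0$ for the top-order term gives
\[ f_i(xq^n) = -\frac{1}{c_n(x)}\sum_{m=0}^{n-1} c_m(x) f_i(xq^m). \]
Substituting this into the $n$th column of $\mathcal{W}(qx)$ and then subtracting, for each $m=1,2,\dots,n-1$, the scalar multiple $-c_m(x)/c_n(x)$ of the $m$th column (which is precisely $(f_i(xq^m))_{i=1}^n$), the $n$th column collapses to $-\bigl(c_0(x)/c_n(x)\bigr)(f_i(x))_{i=1}^n$. At this stage the determinant equals that of the matrix whose columns are $(f_i(xq)), (f_i(xq^2)), \dots, (f_i(xq^{n-1})), -(c_0(x)/c_n(x))(f_i(x))$.

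Finally I would apply the cyclic permutation that moves the last column to the first position. This is a cycle of length $n$ and so contributes a sign of $(-1)^{n-1}$ to the determinant. The pulled-out scalar $-c_0(x)/c_n(x)$ factors out, yielding
\[ \mathcal{W}(qx) = (-1)^{n-1}\left(-\frac{c_0(x)}{c_n(x)}\right)\mathcal{W}(x) = \frac{(-1)^n c_0(x)}{c_n(x)}\mathcal{W}(x), \]
as required. The only delicate point is correct bookkeeping of signs from the column permutation; there are no analytic or convergence issues, since everything takes place at the level of finite determinants and algebraic manipulations valid whenever $c_n(x)\neq 0$.
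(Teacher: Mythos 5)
Your argument is correct. The column operations are valid, the substitution of the recurrence into the last column is handled properly, and the sign bookkeeping is right: the scalar $-c_0(x)/c_n(x)$ combined with the sign $(-1)^{n-1}$ of the length-$n$ cyclic column permutation gives $(-1)^n c_0(x)/c_n(x)$ as claimed. The paper itself does not prove this lemma --- it only cites Abu Risha et al.\ and remarks that the two formulations of the $q$-Wronskian are related by row and column operations --- so your self-contained verification is the natural direct argument one would supply. One small refinement: you can avoid the caveat about $c_n(x)\neq 0$ entirely by working with $c_n(x)\,\mathcal{W}(qx)$, multiplying the last column by $c_n(x)$ before performing the column operations; this yields $c_n(x)\,\mathcal{W}(qx) = (-1)^n c_0(x)\,\mathcal{W}(x)$ as a polynomial identity in the columns, from which the stated relation follows wherever $c_n(x)$ is non-vanishing.
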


\section[The general \_2psi\_2 series]{The general $\boldsymbol{_2\psi_2}$ series} \label{sect:2psi2_series}

\begin{proof}[Proof of Theorem~\ref{thm:2psi2_factorization}]
The identity
\begin{gather}
\sum_{n=-\infty}^\infty {(a_1,a_2)_n \over (b_1,b_2)_n} \bigl(1-b_1q^{n-1}\bigr)\bigl(1-b_2q^{n-1}\bigr) x^n\nonumber\\
\qquad = \sum_{n=-\infty}^\infty {(a_1,a_2)_n \over (b_1,b_2)_n} (1-a_1q^n)(1-a_2q^n) x^{n+1},\label{eqn:elementary_2psi2_shift}
\end{gather}
easily verified by changing $n \mapsto n+1$ on the left-hand side, gives us the well-known three-term $q$-difference equation satisfied by the $_2\psi_2$ function with respect to its variable $x$, namely
\begin{gather*}
 (1-x)\,{}_2\psi_2(x,y) + y\bigg( (a_1+a_2)x - {b_1+b_2 \over q} \bigg) {}_2\psi_2(qx,y) \\
 \qquad{} + y^2 \bigg( {b_1b_2 \over q^2} - a_1a_2x \bigg) {}_2\psi_2\bigl(q^2x,y\bigr)
 =0.
 \end{gather*}
Equivalently, in terms of the function's numerator, ${}_2^{}\psi_2^\ast$, this may be written as
\begin{gather}
\left(1-{b_1b_2 \over a_1a_2qx}\right){}_2^{}\psi_2^\ast(x,y) + y\left( (a_1+a_2)x - {b_1+b_2 \over q} \right) {}_2^{}\psi_2^\ast(qx,y)\nonumber \\
\qquad{}-a_1a_2xy^2 (1-qx) {}_2^{}\psi_2^\ast\bigl(q^2x,y\bigr) =0.\label{eqn:2psi2ast_functional_eqn_wrt_x}
\end{gather}
In this equation, taking $y$ equal to $\rho(x)$, $\rho(qx)$, and $\rho\bigl(q^2x\bigr)$ yields the relations
\begin{gather}
{A(qx) \over A\bigl(q^2x\bigr)} = -{\rho(qx)(1-qx)\theta\bigl({\rho(x) \over \rho(q^2x)}\bigr) \theta\bigl(a_1a_2x\rho(x)\rho(q^2x)\bigr) \over \rho\bigl(q^2x\bigr)^2 ((a_1+a_2)qx-b_1-b_2)\theta\bigl({\rho(x) \over \rho(qx)}\bigr) \theta(a_1a_2x\rho(x)\rho(qx))}, \label{eqn:A_rho_relation_1} \\
{A\bigl(q^2x\bigr) \over A(x)} = {\rho\bigl(q^2x\bigr)^2 (a_1a_2qx-b_1b_2) \theta\bigl({\rho(qx) \over \rho(x)}\bigr) \theta(a_1a_2x\rho(x)\rho(qx)) \over (1-qx)\theta\bigl({\rho(qx) \over \rho(q^2x)}\bigr) \theta\bigl(a_1a_2x\rho(qx)\rho\bigl(q^2x\bigr)\bigr)}, \label{eqn:A_rho_relation_2} \\
{A(x) \over A(qx)} = {((a_1+a_2)qx-b_1-b_2) \theta\bigl({\rho(q^2x) \over \rho(qx)}\bigr) \theta\bigl(a_1a_2x\rho(qx)\rho\bigl(q^2x\bigr)\bigr) \over \rho(qx) (a_1a_2qx-b_1b_2)\theta\bigl({\rho(q^2x) \over \rho(x)}\bigr) \theta\bigl(a_1a_2x\rho(x)\rho\bigl(q^2x\bigr)\bigr)}. \label{eqn:A_rho_relation_3}
\end{gather}
There are really only two distinct relations here, since~\eqref{eqn:A_rho_relation_3} may be obtained by multiplying together~\eqref{eqn:A_rho_relation_1} and~\eqref{eqn:A_rho_relation_2}. These equations may at once be used to determine a functional equation satisfied by the function $\rho$: Changing $x \mapsto qx$ in~\eqref{eqn:A_rho_relation_3} yields an expression for $A(qx)/A\bigl(q^2x\bigr)$ in terms of $\rho$, which may then be compared with the expression for this ratio given already by~\eqref{eqn:A_rho_relation_1}. The result of this is the formula~\eqref{eqn:rho_functional_equation}.

The question of how to express $A$ in terms of $\rho$ is rather more involved. The approach which seems most natural is to rewrite~\eqref{eqn:A_rho_relation_2} in the form which asserts that
\begin{equation} \label{eqn:constant_expression_involving_A_and_rho}
{A(x) A(qx) \theta\bigl({\rho(x) \over \rho(qx)}\bigr) \theta (a_1a_2x\rho(x)\rho(qx) ) \over x\rho(x)\rho(qx) \bigl(qx,{b_1b_2 \over a_1a_2x}\bigr)_\infty}
\end{equation}
is invariant under $x \mapsto qx$. If this expression is assumed to be analytic as a function of $x$ throughout $\C \backslash \{0\}$, then we may conclude that it must be constant, and from the fact that
\[ {}_2^{}\psi_2^\ast(1,y) = {\bigl({b_1b_2 \over a_1a_2}\bigr)_\infty \over (q)_\infty} \theta(a_1y)\theta(b_1y), \]
its value must be
\begin{equation} \label{eqn:conjectural_value}
{A(1) A(q) \theta\bigl({\rho(1) \over \rho(q)}\bigr) \theta(a_1a_2\rho(1)\rho(q)) \over \rho(1)\rho(q) \bigl(q,{b_1b_2 \over a_1a_2}\bigr)_\infty} = -{a_1a_2 \over (q)_\infty^2} {}_2^{}\psi_2^\ast(q,1/a_1) = -a_1a_2\left({b_1 \over a_1},{b_1 \over a_2},{b_2 \over a_1},{b_2 \over a_2}\right)_{ \infty},
\end{equation}
the $q$-Gauss identity~\eqref{eqn:q-Gauss} having been used in the final equality here. The resulting equation expresses $A(x)A(qx)$ in terms of $\rho$. Multiplying it by~\eqref{eqn:A_rho_relation_3} then yields the desired formula~\eqref{eqn:2psi2_A_rho_relation} for $A$ in terms of $\rho$. There is, of course, a problem with this argument: It is not at all obvious that~\eqref{eqn:constant_expression_involving_A_and_rho} should be analytic throughout $\C \backslash \{0\}$. As has been alluded to previously, the functions~$A$ and $\rho$ are multivalued; the function $\rho$ has branch points as described at the end of the present proof. There is also the question of whether~\eqref{eqn:constant_expression_involving_A_and_rho} might have poles arising from the zeros of the $q$-Pochhammer symbols on its denominator. It turns out, however, that~\eqref{eqn:constant_expression_involving_A_and_rho} is indeed identically equal to the right-hand side of~\eqref{eqn:conjectural_value}, but this will now be established by a different argument which makes no assumptions regarding the analytic properties of~\eqref{eqn:constant_expression_involving_A_and_rho}. Instead, the following argument is based on an analysis of a $q$-Wronskian determinant.

An equivalent formulation of~\eqref{eqn:2psi2ast_functional_eqn_wrt_x}, obtained by multiplying both sides by $\theta(x/y)$, is the statement that $\theta(x/y) {}_2^{}\psi_2^\ast(x,y)$ is a solution of the $q$-difference equation
\[ \left(1-{b_1b_2 \over a_1a_2qx}\right) f(x) - x\left( (a_1+a_2)x - {b_1+b_2 \over q} \right) f(qx) - a_1a_2qx^3 (1-qx) f\bigl(q^2x\bigr) =0 \]
for any value of $y$. This solution is analytic for all non-zero values of $x$ and $y$. By Lemma~\ref{lem:Wronskian}, the $q$-Wronskian determinant formed from the two solutions $f_1(x)=\theta(x/y) {}_2^{}\psi_2^\ast(x,y)$ and $f_2(x)=\theta(x/z) {}_2^{}\psi_2^\ast(x,z)$, namely
\begin{align}
\mathcal{W}(x,y,z) &= \theta\left( {qx \over y} \right) {}_2^{}\psi_2^\ast(qx,y) \cdot \theta\left( {x \over z} \right) {}_2^{}\psi_2^\ast(x,z) - \theta\left( {x \over y} \right) {}_2^{}\psi_2^\ast(x,y) \cdot \theta\left( {qx \over z} \right) {}_2^{}\psi_2^\ast(qx,z) \nonumber \\
&= {1 \over x}\theta\left( {x \over y} \right) \theta\left( {x \over z} \right) ( z \sp {}_2^{}\psi_2^\ast(x,y) {}_2^{}\psi_2^\ast(qx,z) - y \sp {}_2^{}\psi_2^\ast(qx,y){}_2^{}\psi_2^\ast(x,z) ), \label{eqn:2psi2_Wronskian_def}
\end{align}
satisfies the relation
\[ \mathcal{W}(qx,y,z) = -{1-{b_1b_2 \over a_1a_2qx} \over a_1a_2qx^3(1-qx)} \mathcal{W}(x,y,z). \]
Since, regarded as a function of $x$, $\mathcal{W}$ is analytic on $\C \backslash \{0\}$, it follows from Lemma~\ref{lem:homogeneous_first-order_q-difference_solution} that
\[ \mathcal{W}(x,y,z) = g(x,y,z) \left(qx,{b_1b_2 \over a_1a_2x}\right)_{ \infty} \]
for some function $g$ which, as a function of $x$, belongs to the space $\Theta_3(a_1a_2q)$. Since $\mathcal{W}(x,y,z)$ has zeros at $x=y$ and at $x=z$, it follows from Lemmas~\ref{lem:theta_root} and~\ref{lem:theta_factorization} that
\begin{equation} \label{eqn:initial_2psi2_Wronskian_factorization}
g(x,y,z) = h_1(y,z) \theta\left( {x \over y} \right) \theta\left( {x \over z} \right) \theta(a_1a_2qxyz)
\end{equation}
for some function $h_1$ which is independent of $x$.

Regarded as functions of $y$, both ${}_2^{}\psi_2^\ast(x,y)$ and $y {}_2^{}\psi_2^\ast(qx,y)$ belong to the space $\Theta_2(a_1a_2x)$. By Lemma~\ref{lem:theta_determinant_lemma}, it follows that
\[ \left| \begin{matrix} {}_2^{}\psi_2^\ast(x,y) &{}_2^{}\psi_2^\ast(x,z) \\
y \sp {}_2^{}\psi_2^\ast(qx,y) &z \sp {}_2^{}\psi_2^\ast(qx,z) \end{matrix} \right| = h_2(x) \Delta_2(y,z) \theta(a_1a_2xyz) \]
for some function $h_2$ which is independent of $y$ and $z$. Consequently
\[ \mathcal{W}(x,y,z) = {z h_2(x) \over x}\theta\left( {x \over y} \right) \theta\left( {x \over z} \right) \theta\left({y \over z} \right) \theta(a_1a_2xyz). \]
From comparing this with~\eqref{eqn:initial_2psi2_Wronskian_factorization}, it follows that
\begin{equation} \label{eqn:prelim_2psi2_Wronskian_factorization}
\mathcal{W}(x,y,z) = {Cz \over x} \left(qx,{b_1b_2 \over a_1a_2x}\right)_{ \infty} \theta\left({y \over z} \right) \theta\left( {x \over y} \right) \theta\left( {x \over z} \right) \theta(a_1a_2xyz)
\end{equation}
for some constant $C$ to be determined. In order to find the value of $C$, let us take $y=q/b_1$ and~${z=q/b_2}$. Then we have from the defining formula~\eqref{eqn:2psi2_Wronskian_def}
\begin{align*}
\mathcal{W}\left(x,{q \over b_1},{q \over b_2}\right)= {}&{q \over x}\theta\left( {b_1x \over q} \right) \theta\left( {b_2x \over q} \right) \left( {1 \over b_2} {}_2^{}\psi_2^\ast\left(x,{q \over b_1}\right) {}_2^{}\psi_2^\ast\left(qx,{q \over b_2}\right)\right.\\
 &\left.- {1 \over b_1} {}_2^{}\psi_2^\ast\left(qx,{q \over b_1}\right){}_2^{}\psi_2^\ast\left(x,{q \over b_2}\right) \right) \\
={}&{qb_1 \over x(b_1-b_2)} \left(x,qx,{b_1b_2 \over a_1a_2x},{b_1b_2 \over a_1a_2qx},{b_1 \over a_1},{b_1 \over a_2},{b_2 \over a_1},{b_2 \over a_2},q\right)_{ \infty} \\
&\times\theta\left( {b_2 \over b_1} \right) \theta\left( {b_1x \over q} \right) \theta\left( {b_2x \over q} \right) \\
& \times \left( {1 \over b_2} {}_2\psi_2\left(x,{q \over b_1}\right) {}_2\psi_2\left(qx,{q \over b_2}\right) - {1 \over b_1} {}_2\psi_2\left(qx,{q \over b_1}\right){}_2\psi_2\left(x,{q \over b_2}\right) \right)
\end{align*}
and from the factorization~\eqref{eqn:prelim_2psi2_Wronskian_factorization}
\[ \mathcal{W}\left(x,{q \over b_1},{q \over b_2}\right) = {Cq \over b_2x} \left(qx,{b_1b_2 \over a_1a_2x}\right)_{ \infty} \theta\left({b_2 \over b_1} \right) \theta\left( {b_1x \over q} \right) \theta\left( {b_2x \over q} \right) \theta\left({a_1a_2q^2x \over b_1b_2}\right). \]
From equating these two expressions for $\mathcal{W}(x,q/b_1,q/b_2)$, it follows that the constant $C$ is given~by
\begin{align*}
 C = {}&{b_1b_2 \bigl(x,{b_1 \over a_1},{b_1 \over a_2},{b_2 \over a_1},{b_2 \over a_2}\bigr)_\infty \over (b_1-b_2) \bigl({a_1a_2q^2x \over b_1b_2}\bigr)_\infty}\\
 &\times\left( {1 \over b_2} {}_2\psi_2\left(x,{q \over b_1}\right) {}_2\psi_2\left(qx,{q \over b_2}\right) - {1 \over b_1} {}_2\psi_2\left(qx,{q \over b_1}\right){}_2\psi_2\left(x,{q \over b_2}\right) \right).
 \end{align*}
In this formula the value of $x$ is arbitrary. The simplest choice is to take $x=0$ (the right-hand side being analytic there); this yields the value
\[ C = \left({b_1 \over a_1},{b_1 \over a_2},{b_2 \over a_1},{b_2 \over a_2}\right)_{ \infty}. \]
Hence
\[ \mathcal{W}(x,y,z) = {z \over x} \left({b_1 \over a_1},{b_1 \over a_2},{b_2 \over a_1},{b_2 \over a_2},qx,{b_1b_2 \over a_1a_2x}\right)_{ \infty} \theta\left({y \over z} \right) \theta\left( {x \over y} \right) \theta\left( {x \over z} \right) \theta(a_1a_2xyz) \]
and so
\begin{gather}
z \sp {}_2^{}\psi_2^\ast(x,y) {}_2^{}\psi_2^\ast(qx,z) - y \sp {}_2^{}\psi_2^\ast(qx,y){}_2^{}\psi_2^\ast(x,z)\nonumber\\
\qquad = z \left({b_1 \over a_1},{b_1 \over a_2},{b_2 \over a_1},{b_2 \over a_2},qx,{b_1b_2 \over a_1a_2x}\right)_{ \infty} \theta\left({y \over z} \right) \theta(a_1a_2xyz). \label{eqn:2psi2_Wronskian_factorization}
\end{gather}
In terms of the functions $A$ and $\rho$, the left-hand side of this equation is
\begin{gather*}
A(x) A(qx) \left( z \theta\left({y \over \rho(x)}\right) \theta (a_1a_2xy\rho(x) ) \theta\left({z \over \rho(qx)}\right) \theta (a_1a_2qxz\rho(qx) )\right. \\
\left.\qquad{}- y \theta\left({y \over \rho(qx)}\right) \theta (a_1a_2qxy\rho(qx) ) \theta\left({z \over \rho(x)}\right) \theta (a_1a_2xz\rho(x) ) \right).
\end{gather*}
Weierstrass's three-term identity (for which see~\cite[Exercise~2.16\,(i)]{Gasper2004}) may be used to equate this to
\[ -{z A(x) A(qx) \over a_1a_2x \rho(x)\rho(qx)} \theta\left({\rho(x) \over \rho(qx)}\right) \theta (a_1a_2x\rho(x)\rho(qx) ) \theta\left({y \over z} \right) \theta(a_1a_2xyz). \]
The identity~\eqref{eqn:2psi2_Wronskian_factorization} may therefore be expressed in the equivalent form
\begin{equation} \label{eqn:conjectural_value_established}
A(x) A(qx) = -{a_1a_2x\rho(x)\rho(qx)\bigl({b_1 \over a_1},{b_1 \over a_2},{b_2 \over a_1},{b_2 \over a_2},qx,{b_1b_2 \over a_1a_2x}\bigr)_{ \infty} \over \theta\bigl({\rho(x) \over \rho(qx)}\bigr) \theta(a_1a_2x\rho(x)\rho(qx))},
\end{equation}
which is the equality between~\eqref{eqn:constant_expression_involving_A_and_rho} and the right-hand side of~\eqref{eqn:conjectural_value} claimed earlier. Alternatively, this equation may be obtained from~\eqref{eqn:2psi2_Wronskian_factorization} directly, without the use of Weierstrass's three-term identity, by setting $z=\rho(x)$. Multiplying together~\eqref{eqn:A_rho_relation_3} and~\eqref{eqn:conjectural_value_established} yields{\samepage
\begin{align*}
A(x)^2 ={}& -{a_1a_2x \rho(x) ((a_1+a_2)qx-b_1-b_2 ) \bigl(qx,{b_1b_2 \over a_1a_2x},{b_1 \over a_1},{b_1 \over a_2},{b_2 \over a_1},{b_2 \over a_2}\bigr)_\infty \theta\bigl( {\rho(q^2x) \over \rho(qx)} \bigr) \over (a_1a_2qx-b_1b_2) \theta\bigl({\rho(x) \over \rho(qx)}\bigr) \theta\bigl({\rho(q^2x) \over \rho(x)}\bigr) \theta\bigl(a_1a_2x\rho(x)\rho(qx) ) \theta \bigl(a_1a_2x\rho(x)\rho\bigl(q^2x\bigr) \bigr)} \\
& \times \theta\bigl( a_1a_2x\rho(qx)\rho\bigl(q^2x\bigr)\bigr).
\end{align*}
An application of the relation~\eqref{eqn:rho_functional_equation}, with $x \mapsto x/q$, shows that this is equivalent to~\eqref{eqn:2psi2_A_rho_relation}.}

To obtain the formula for $\rho$ given in~\eqref{eqn:rho_integral_formula}, take $y=\pm1/\sqrt{a_1a_2x}$ in~\eqref{eqn:weak_2psi2ast_factorization} and divide the resulting equations to obtain
\[ {_2^{}\psi_2^\ast(x,1/\sqrt{a_1a_2x}) \over _2^{}\psi_2^\ast(x,-1/\sqrt{a_1a_2x})} = -{\theta \bigl( \rho(x)\sqrt{a_1a_2x} \bigr)^2 \over \theta \bigl({-}\rho(x)\sqrt{a_1a_2x} \bigr)^2}. \]
From here,~\eqref{eqn:rho_integral_formula} follows at once from one of Jacobi's inversion formulae, for which see~\cite[p.~480]{Whittaker1927}, viz.\footnote{In~\cite{Whittaker1927}, this classical result is stated in terms of the Jacobian elliptic function $\operatorname{sn}$. Its formulation given here may be obtained using~\cite[equation~(A), p.~492]{Whittaker1927}.}
\begin{gather*}
 y = {\theta(x)^2 \over \theta(-x)^2} \quad\; \text{if and only if} \\
 x = \exp \Bigg( {1 \over \theta(\sqrt{q})\theta(-\sqrt{q})} \int_0^y {\dd u \over \sqrt{u\Bigl( 1-{\theta(\sqrt{q})^2 \over \theta(-\sqrt{q})^2}u\Bigr)\Bigl( 1-{\theta(-\sqrt{q})^2 \over \theta(\sqrt{q})^2}u\Bigr)}} \Bigg).
 \end{gather*}
(The integral should be understood here in its multi-valued sense, so that the complete set of solutions of $y=\theta(x)^2/\theta(-x)^2$ for $x$ in terms of $y$ is obtained by taking the complete set of branches of the integral. If $x=x_0$ is one solution, then the complete set of solutions is given by~${x=q^n x_0}$ and $x=q^n/x_0$.)

The final part of Theorem~\ref{thm:2psi2_factorization}, pertaining to the branch points of the function $\rho$, remains to be proved. The right-hand side of~\eqref{eqn:rho_integral_formula} is, as a function of $x$, analytic at every point of~${\C \backslash \{0\}}$ except those for which the upper limit of integration coincides with a branch point of the integrand. The integrand has four branch points: $0$, $\infty$, and the two points \smash{$\theta (\sqrt{q} )^2/\theta(-\sqrt{q})^2$} and \smash{$\theta(-\sqrt{q})^2/\theta(\sqrt{q})^2$}. Hence, $\rho$ is analytic except for branch points which occur exactly at the values of $x$ which satisfy one of the following four equations:
\begin{gather}
_2^{}\psi_2^\ast\bigl(x,1/\sqrt{a_1a_2x}\bigr) = 0, \qquad
_2^{}\psi_2^\ast\bigl(x,-1/\sqrt{a_1a_2x}\bigr) = 0, \nonumber \\
\theta(-\sqrt{q})^2 {}_2^{}\psi_2^\ast\bigl(x,1/\sqrt{a_1a_2x}\bigr) + \theta\bigl(\sqrt{q}\bigr)^2 {}_2^{}\psi_2^\ast\bigl(x,-1/\sqrt{a_1a_2x}\bigr) = 0, \label{eqn:branch_point_condition_3} \\
\theta\bigl(\sqrt{q}\bigr)^2 {}_2^{}\psi_2^\ast\bigl(x,1/\sqrt{a_1a_2x}\bigr) + \theta\bigl(-\sqrt{q}\bigr)^2 {}_2^{}\psi_2^\ast\bigl(x,-1/\sqrt{a_1a_2x}\bigr) = 0. \label{eqn:branch_point_condition_4}
\end{gather}
The left-hand sides of~\eqref{eqn:branch_point_condition_3} and~\eqref{eqn:branch_point_condition_4} are equal to
\[ \theta(-1)^2{}_2^{}\psi_2^\ast\bigl(x,\sqrt{q/a_1a_2x}\bigr) \qquad \text{and} \qquad \theta(-1)^2{}_2^{}\psi_2^\ast\bigl(x,-\sqrt{q/a_1a_2x}\bigr), \]
respectively. This may be seen from taking \smash{$y=\pm\sqrt{q/a_1a_2x}$}, $z_1=1/\!\sqrt{a_1a_2x}$ and $z_2=\!-1/\!\sqrt{a_1a_2x}$ in the identity
\[ _2^{}\psi_2^\ast(x,y) = {z_2\theta\bigl({y \over z_2}\bigr) \theta(a_1a_2xyz_2) {}_2^{}\psi_2^\ast(x,z_1) - z_1\theta\bigl({y \over z_1}\bigr) \theta(a_1a_2xyz_1) {}_2^{}\psi_2^\ast(x,z_2) \over z_2\theta\bigl({z_1 \over z_2}\bigr) \theta(a_1a_2xz_1z_2)}, \]
which is simply an instance of~\eqref{eqn:Slater_general_linear_bilateral_relation} since, as a function of $y$, $_2^{}\psi_2^\ast(x,y)$ belongs to the space $\Theta_2(a_1a_2x)$.
\end{proof}

\begin{Remark}\label{rmk:Gustafson_equivalence}
It happens that~\eqref{eqn:2psi2_Wronskian_factorization} is equivalent to the special case $r=2$ of Gustafson's~$A_r$ generalization of the $_1\psi_1$ summation~\cite[Theorem~1.17]{Gustafson1987}. The general case of Gustafson's formula is also obtainable from an application of Lemmas~\ref{lem:theta_determinant_lemma} and~\ref{lem:homogeneous_first-order_q-difference_solution}. This is described in the next section.
\end{Remark}

\begin{Remark}
From setting $z=y\sqrt{q}$ in~\eqref{eqn:2psi2_Wronskian_factorization}, it follows that, as a function of the variable $y$, the function
\[ {}_2^{}\psi_2^\ast(x,y) {}_2^{}\psi_2^\ast(qx,y\sqrt{q}) + {\theta(\sqrt{q}) \over 2\sqrt{q}} \left(qx,{b_1b_2 \over a_1a_2x},{b_1 \over a_1},{b_1 \over a_2},{b_2 \over a_1},{b_2 \over a_2}\right)_{ \infty} \theta\bigl(a_1a_2xy^2\sqrt{q}\bigr) \]
belongs to the space $\Theta_2(a_1a_2x\sqrt{q};\sqrt{q})$. \big(Here the notation $\Theta_n(c;q)$ is used in place of $\Theta_n(c)$ to allow for a change of base; thus the space in question here consists of all functions analytic on~$\C \backslash \{0\}$ which satisfy the relation $f(y\sqrt{q}) = f(y)/\bigl(a_1a_2xy^2\sqrt{q}\bigr)$.\big)
\end{Remark}

\section[The general \_r psi\_r series]{The general $\boldsymbol{_r\psi_r}$ series}

The purpose of this section is to demonstrate, in outline, how the result of Theorem~\ref{thm:2psi2_factorization} may be generalized to describe the relationship between the undetermined functions in Theorem~\ref{thm:general_rpsir_factorization} for any positive integer $r$. It turns out that the function $A$ in the factorization~\eqref{eqn:general_rpsir_factorization} is always expressible in terms of the functions $\rho_j$ and the variables (excluding $y$). The method is based on that of the previous section.

Let $\lambda_j$ denote the coefficients in the expansion
\[ x\prod_{j=1}^r (1-a_jy) - \prod_{j=1}^r (1-b_j y/q)= \sum_{j=0}^r \lambda_j(x) y^j. \]
Then from the elementary identity
\[ \sum_{n=-\infty}^\infty {(a_1y,\dots,a_ry)_n \over (b_1y,\dots,b_ry)_n} \prod_{j=1}^r\bigl(1-b_jyq^{n-1}\bigr) x^n = \sum_{n=-\infty}^\infty {(a_1y,\dots,a_ry)_n \over (b_1y,\dots,b_ry)_n} \prod_{j=1}^r(1-a_jyq^n) x^{n+1}, \]
which is the obvious generalization of~\eqref{eqn:elementary_2psi2_shift}, it is apparent that
\[ \sum_{j=0}^r y^j \lambda_j(x) {}_r\psi_r\bigl(xq^j,y\bigr) = 0, \]
or equivalently,
\begin{equation} \label{eqn:rpsir_ast_linear_relation}
\sum_{j=0}^r {y^j \lambda_j(x) (x)_j \over \bigl({b_1\cdots b_r \over a_1\cdots a_r xq^j}\bigr)_j} {}_r^{}\psi_r^\ast\big(xq^j,y\big) = 0.
\end{equation}
Hence, for any given non-zero complex numbers $y_1, y_2, \dots, y_r$, for $k=1,2,\dots,r$, each of the functions $f_k(x) = \theta(x/y_k) {}_r^{}\psi_r^\ast(x,y_k)$ is a solution of the $q$-difference equation
\[ \sum_{j=0}^r {(-1)^j q^{j(j-1) \over 2} x^j \lambda_j(x) (x)_j \over \bigl({b_1\cdots b_r \over a_1\cdots a_r xq^j}\bigr)_j} f\bigl(xq^j\bigr) = 0. \]
By Lemma~\ref{lem:Wronskian}, the $q$-Wronskian
\[ \mathcal{W}(x,y_1,y_2,\dots,y_r) = \det_{1 \leq i,j \leq r}\bigl( \theta\bigl(q^{j-1}x/y_i\bigr) {}_r^{}\psi_r^\ast\bigl(q^{j-1}x,y_i\bigr) \bigr) \]
therefore satisfies the relation
\[ {\mathcal{W}(qx,y_1,y_2,\dots,y_r) \over \mathcal{W}(x,y_1,y_2,\dots,y_r)} = {\lambda_0(x) \bigl({b_1\cdots b_r \over a_1\cdots a_r xq^r}\bigr)_r \over q^{r(r-1) \over 2} x^r \lambda_r(x) (x)_r} = {(-1)^{r+1} q^{-{r(r-1) \over 2}} x^{-r-1} \bigl({b_1\cdots b_r \over a_1\cdots a_r xq^{r-1}}\bigr)_{r-1} \over a_1\cdots a_r (qx)_{r-1}}. \]
By Lemmas~\ref{lem:theta_root} and~\ref{lem:homogeneous_first-order_q-difference_solution}, this implies that
\[ {\mathcal{W}(x,y_1,y_2,\dots,y_r) \over \theta(x/y_1)\theta(x/y_2) \cdots \theta(x/y_r) \theta\bigl(a_1 y_1 \cdots a_r y_r q^{r(r-1) \over 2} x\bigr) \prod_{j=1}^{r-1} \bigl(xq^j,{b_1\cdots b_r \over a_1\cdots a_r x q^{j-1}}\bigr)_\infty} \]
is independent of $x$. Equivalently, the slightly modified determinant
\[ \widetilde{\mathcal{W}}(x,y_1,y_2,\dots,y_r) = \det_{1 \leq i,j \leq r}\bigl( y_i^{j-1} {}_r^{}\psi_r^\ast\bigl(q^{j-1}x,y_i\bigr) \bigr) \]
has the property that
\[ {\widetilde{\mathcal{W}}(x,y_1,y_2,\dots,y_r) \over \theta(a_1y_1\cdots a_ry_r x)\prod_{j=1}^{r-1} \bigl(xq^j,{b_1\cdots b_r \over a_1\cdots a_r x q^{j-1}}\bigr)_\infty} \]
is independent of $x$. On the other hand, Lemma~\ref{lem:theta_determinant_lemma} implies that
\[ {\widetilde{\mathcal{W}}(x,y_1,y_2,\dots,y_r) \over \Delta_r(y_1,y_2,\dots,y_r) \theta(a_1y_1\cdots a_ry_r x)} \]
is independent of $y_1,y_2,\dots,y_r$. Consequently
\begin{equation} \label{eqn:rpsir_Wronskian_ratio}
{\widetilde{\mathcal{W}}(x,y_1,y_2,\dots,y_r) \over \Delta_r(y_1,y_2,\dots,y_r) \theta(a_1y_1\cdots a_ry_r x)\prod_{j=1}^{r-1} \bigl(xq^j,{b_1\cdots b_r \over a_1\cdots a_r x q^{j-1}}\bigr)_\infty}
\end{equation}
is independent of $x$ and of $y_1,y_2,\dots,y_r$. Its value, which is
\begin{equation} \label{eqn:value_of_rpsir_Wronskian_ratio}
(q)_\infty^{-{(r-1)(r-2) \over 2}} \prod_{i,j=1}^r \left({b_i \over a_j}\right)_{ \infty},
\end{equation}
may be found by setting $y_j = q/b_j$ for each $j$ and then, after cancelling factors which are not analytic at the origin, setting $x=0$. Per Remark~\ref{rmk:Gustafson_equivalence}, the assertion that~\eqref{eqn:rpsir_Wronskian_ratio} and~\eqref{eqn:value_of_rpsir_Wronskian_ratio} are equal is equivalent to Gustafson's $A_r$ generalization of~\eqref{eqn:1psi1}. The relation between these two formulae may be seen by writing Gustafson's multi-series as a determinant of single series. (The summand of Gustafson's series is expressible as a Vandermonde determinant.) A rather different determinant of basic hypergeometric series, in which each entry is itself a multi-series, is considered by Aomoto and Ito~\cite[Theorem~3.9]{Aomoto2008}; their result generalizes Gustafson's $C_n$ generalization of~\eqref{eqn:6psi6}.

From the equality between~\eqref{eqn:rpsir_Wronskian_ratio} and~\eqref{eqn:value_of_rpsir_Wronskian_ratio}, there follows a relation between the functions $A$ and $\rho_j$ appearing in the factorization~\eqref{eqn:general_rpsir_factorization}. This expresses the product $A(x)A(qx) \dots A\bigl(q^{r-1}x\bigr)$ in terms of the $\rho_j$. On the other hand, substituting~\eqref{eqn:general_rpsir_factorization} into~\eqref{eqn:rpsir_ast_linear_relation} and dividing both sides by~$A(x)$ yields the equation
\[ \sum_{j=1}^r {y^j \lambda_j(x) (x)_j \over \bigl({b_1\cdots b_r \over a_1\cdots a_r xq^j}\bigr)_j} {A\bigl(xq^j\bigr) \over A(x)} \theta\bigg( {y \over \rho_1\bigl(xq^j\bigr)} \bigg) \cdots \theta\bigg( {y \over \rho_r\bigl(xq^j\bigr)} \bigg) = - \lambda_0(x) \theta\left( {y \over \rho_1(x)} \right) \cdots \theta\left( {y \over \rho_r(x)} \right). \]
Taking $r$ different values of $y$ in this equation yields a system of simultaneous equations for the ratios $A\bigl(xq^j\bigr)/A(x)$ for $j=1,2,\dots,r$. The values of these ratios obtained by solving the equations may then be combined with the expression already obtained for the product $A(x)A(qx) \cdots A\bigl(q^{r-1}x\bigr)$ to determine $A(x)$ in terms of the $\rho_j$. The details of this will appear in a later paper.

\subsection*{Acknowledgements}
My thanks are due to the referees for their comments and suggestions and for drawing my attention to the papers~\cite{Ito2014,Ito2008, Rosengren2006}. The author is supported by the FCT project UIDP/00208/2020.

\pdfbookmark[1]{References}{ref}
\LastPageEnding

\end{document}